\numberwithin{equation}{section}
\theoremstyle{plain}
\newtheorem{lemma}{Lemma}[section]
\newtheorem{theorem}[lemma]{Theorem}
\newtheorem{corollary}[lemma]{Corollary}
\newtheorem{proposition}[lemma]{Proposition}
\newtheorem{definition}[lemma]{Definition}
\newtheorem{example}[lemma]{Example}
\renewcommand{\mathbb}{\mathbbm}                     % use mathbbm
\renewcommand{\epsilon}{\varepsilon}                 % AMS symbols
\renewcommand{\phi}{\varphi}
\renewcommand{\theta}{\vartheta}
\renewcommand{\le}{\leqslant}
\renewcommand{\ge}{\geqslant}
\newcommand{\origsetminus}{} \let\origsetminus=\setminus     % redefine setminus
\renewcommand{\setminus}{\!\origsetminus\!}
\newcommand{\origfoo}{} \let\origfoo=\sqrt           % redefine square root
\renewcommand{\sqrt}[1]{\origfoo{#1}\;}
\newcommand{\abs}[1]{\left\lvert #1 \right\rvert}    % absolut value
\newcommand{\norm}[1]{\left\lVert #1 \right\rVert}   % norm
\newcommand{\Q}{\mathbb Q}                           % expectation
\DeclareMathOperator{\F}{{\mathcal F}}                   % sigma-fiel
\DeclareMathOperator{\R}{{\mathbb R}}                % reals
\DeclareMathOperator{\Rp}{{\mathbb R}_+}             % positive reals
\DeclareMathOperator{\N}{{\mathbb N}}                % integer
\DeclareMathOperator{\Id}{ Id}                        % identity
\DeclareMathOperator{\I}{\mathcal I}                          %
\newcommand{\A}{{\mathcal A}}
\DeclareMathOperator{\Borel}{{\mathcal B}}
\newcommand{\scapro}[2]{\langle #1,#2\rangle}       %Skalarprodukt
\DeclareMathOperator{\1}{\mathbbm 1}
\renewcommand{\S}{{\mathcal S}}
\renewcommand{\L}{{\mathcal L}}
\newcounter{zahl}
\numberwithin{equation}{section}
\newcommand{\bcase}{\begin{cases}}
\newcommand{\ecase}{\end{cases}}
\newcommand{\pmat}{\begin{pmatrix}}
\newcommand{\epmat}{\end{pmatrix}}
\newcommand{\barray}{\begin{array}{rcl}}
\newcommand{\earray}{\end{array}}
\newcommand{\del}[1]{}
\newcommand{\be} {\begin{enumerate} }
\newcommand{\ee} {\end{enumerate} }
\newcommand{\TT}{{\rm I \kern -0.2em T}}
\newcommand{\DEQS}{\begin{eqnarray*}}
\newcommand{\EEQS}{\end{eqnarray*}}
\newcommand{\DEQSZ}{\begin{eqnarray}}
\newcommand{\EEQSZ}{\end{eqnarray}}
\DeclareMathOperator{\M}{\mathcal M}          % \space of probabity measures
\DeclareMathOperator{\ii}{i}                  % isomorphism Hilbert and l^2
\DeclareMathOperator{\jj}{j}                 % inverse of \ii
\begin{document}

\title{Copulas in Hilbert spaces}

\author{
Erika Hausenblas\\
Department of Mathematics\\ Montanuniversity Leoben\\ 8700 Leoben\\
 Austria
\and
Markus Riedle\footnote{markus.riedle@kcl.ac.uk}{}\\
Department of Mathematics\\
King's College\\
London WC2R 2LS\\
United Kingdom}

\maketitle

\begin{abstract}
In this article, the concept of copulas is generalised to infinite dimensional  Hilbert spaces. We show one direction of Sklar's theorem and explain that the other direction
fails in infinite dimensional Hilbert spaces. We derive a necessary and sufficient condition which allows to state this direction of Sklar's theorem in Hilbert spaces. We consider copulas with densities and specifically construct copulas in a Hilbert space by a family of pairwise copulas with densities.
\end{abstract}

\noindent
{\rm \bf AMS 2010 subject classification:}  62H20, 60E05, 28D20, 94A17\\
{\rm \bf Key words and phrases:} copulas, Gaussian copulas, pairwise copulas,
Hilbert spaces, cylindrical measures

 \thispagestyle{empty}

\section{Introduction}

{Correlation} is the most widely applied measure for dependence between random variables. However, it is well known that correlation reflects the dependence structure only in very specific situations well. Most obviously, correlation fails to capture any nonlinear dependencies in a data set. If the underlying distribution
is not normal or even not elliptic, then the failure of correlation as a  measure for dependence is demonstrated in Embrechts, McNeil and Straumann \cite{EmbrechtsMcNeilStraumann}. Applying correlation as a measure for dependence
requires finite second moments for the underlying distribution and its estimation
often relies on finite higher moments. As a consequence, its usage becomes problematic
or must be even ruled out if the underlying distribution exhibits heavy tails
as it is often the case for financial data, see e.g. Cont \cite{Cont2001}.

There exist various alternatives to the correlation as a measure of dependence,
such as  {\em Hellinger distance}, {\em Kendall's tau}, {\em Kullback-Leibler divergence}  and the {\em mutual information}. Most prominent example is the approach by {\em copulas}. Although copulas were accused of  increasing the severity of the financial crisis 2007-08, see Salmon \cite{Salmon}, they are still widely applied in  quantitative finance and insurance industry for risk management and pricing; see e.g. McNeil et al.\
\cite{McNeil-etal} and Malevergne and Sornette \cite{MalevergneSornette} for pre-crisis publications and Brigo et al.\ \cite{Brigo} for a post-crisis publication.  But the application of copulas are not restricted to quantitative finance and actuarial sciences. In hydrology, copulas are  used to model dependencies among storm variables or to interpolate precipitation across spaces; see AghaKouchak et al.\  \cite{AghaKouchak}, Golian et al.\ \cite{Rainfall1} and Salvadori and De Michele \cite{hydro1}. Copulas are applied in meteorology, see Bonazzi et al.\ \cite{windstorm} and Fuenteset et al.\ \cite{maxtemp} and geophysics, see Yu et al. \cite{geophysics}.
Copulas can be found for modelling travel behavior in Bhat and Elurua \cite{trans1} and Eluru et al. \cite{trans3}. Copulas are used in reliability engineering for analysing
the dependence of components of complex systems in Lai and Xie \cite{LaiXie}. Numerous more applications of copulas can be listed in medicine, environmental and civil engineering
and many other areas.

Despite their popularity, copulas are seen as a strictly finite dimensional concept.
The reason  might be found in the fact that their definition strongly relies on properties
of cumulative distribution functions, which are a much less useful tool in infinite dimensional spaces. In this article, we show that nevertheless the concept of copulas can be generalised to infinite dimensional Hilbert spaces. We demonstrate this novel concept by several examples for constructing copulas in Hilbert spaces.

Our original motivation of this work is to develop a tool for describing the dependence  of the components of an infinite dimensional L\'evy process. The majority of specific examples of L\'evy processes in infinite dimensional spaces are constructed by sums of independent components, see e.g.\ Priola and Zabczyk \cite{PriolaZabczyk}. For finite dimensional L\'evy processes, describing the dependence structure by copulas is a well developed tool, resulting in {\em L\'evy copulas}; see e.g.\ \cite{Barndorff-NielsenLindner} and \cite{KallsenTankov}. In the infinite dimensional setting, we will introduce this concept in a forthcoming work
based on the present article.

Our generalisation of copulas enables us to model  the dependence structure of complex dynamical behaviour in time and space by copulas as it is well known for ordinary systems. For illustrating this, let $V(t,x)$ present a stochastic process in time $t$ and space $x$, e.g.\ the solution of a stochastic partial differential equation. By assuming that $V(t,\cdot)$ is an element of a Hilbert space with orthonormal basis
$(e_k)_{k\in\N}$, the stochastic process $V$ can be represented by
its Fourier expansion as
$V(t,x)=\sum_{k=1}^\infty Y_k(t,x)e_k(x)$ where $Y_k(t,x):=\scapro{V(t,x)}{e_k(x)}$. The dependence structure
of the Hilbert space valued random variables $\{Y_k(t,\cdot):\, k\in\N\}$ for
some $t\ge 0$
can be described by the copulas introduced in the present  work. As a specific example,
one might think of $V(t,x)$ as the displacement of a oil rig from its idle state at $x\in [0,L]$
at time $t$ caused by heavy waves. One way to model this rig is by an Euler--Bernoulli beam of length $L$ exposed to a random process. As an orthonormal basis $\{e_k:k\in\N\}$
of  the space $L^2([0,L])$ of square-integrable functions one can choose the
Fourier basis, i.e.\ the modes of the beam subjected to the boundary conditions.
In this example, the copula gives information on the dependence of the displacement on the amplitude of the wave. If the beam starts to vibrate in one mode, knowing the copula, one has some information how probable the other modes  will be effected and how quick the resonance will propagate to other modes. In particular, one might forecast some crack or other damages.

We introduce our notation and shortly review copulas in the next Section
\ref{se.preliminary}. Our approach to copulas in Hilbert spaces is
described in Section \ref{se.Hilbert} where we also state and prove one
direction of Sklar's theorem. The failure of the other direction in Sklar's theorem in Hilbert spaces is  explained in the following Section \ref{se.product}. Here we also derive a necessary and sufficient condition which allows to state this direction of Sklar's theorem.
In the last Section \ref{se.densities}, we  consider copulas with densities and specifically construct copulas in a Hilbert space bya family of pairwise copulas
with densities.

\section{Preliminaries}\label{se.preliminary}

Let $H$ be a separable Hilbert space with inner product $\scapro{\cdot}{\cdot}$,
induced norm $\norm{\cdot}$ and an orthonormal basis $(e_k)_{k\in\N}$.
The projection on  the Euclidean space is defined for $k\le \ell$ by
\begin{align*}
\pi_{e_k,\dots  ,e_\ell}\colon H \to \R^{\ell-k+1},\qquad
\pi_{e_k,\dots, e_\ell}(h):=\big( \scapro{h}{e_k},\dots,\scapro{h}{e_\ell}\big).
\end{align*}
The Borel $\sigma$-algebra is denoted by $\Borel(H)$. The function
\begin{align}\label{eq.H-and-l2}
\ii\colon H\to \ell^2, \qquad
\ii(h)=\big(\scapro{h}{e_k}\big)_{k\in\N}.
\end{align}
maps the Hilbert space $H$ isometrically isomorphic to the space $\ell^2$ of square summable
sequences.

If $(S,{\mathcal S})$ is a measurable space then $\M(S)$ denotes the
space of all probability measures on ${\mathcal S}$. Most often, we consider $\M(H)$ for a Hilbert space $H$ with the Borel $\sigma$-algebra $\Borel(H)$, in which case we equip $\M(H)$ with the Prokhorov metric. The subspace of probability measures $\mu$ with $p$-th moments, i.e.\ $\int \norm{u}^p\,\mu(du)<\infty$, is denoted with $\M^p(H)$. The space of
product measures is denoted by $\M^{\otimes}(H)$, that is
\begin{align*}
 \M^{\otimes}(H):=\left\{\mu\in \M(H):\,
 \mu\circ \ii^{-1}=\bigotimes_{k=1}^\infty \mu\circ \pi_{e_k}^{-1} \right\}.
\end{align*}
The space $\M^{c}(H)$ denotes the space of all
probability measures $\mu\in\M(H)$ such that all marginal measures have continuous
cumulative distribution function (cdf), that is
\begin{align*}
 \M^{c}(H):=\left\{\mu\in \M(H):\,
x\mapsto \mu\circ \pi_{e_k}^{-1}\big((-\infty,x]\big)
\text{ is continuous for all }k\in\N \right\}.
\end{align*}
We often use intersections of some of these subspaces, e.g.\ $\M^{\otimes}\cap\M^2(H)$.

For a probability measure $\mu\in\M(\R^k)$ the {\em cumulative distribution function (cdf)} is defined as the function
\begin{align*}
 F_\mu\colon\R^k\to [0,1],
 \qquad F_\mu(x_1,\dots, x_k):=
 \mu\big(I(x_1)\times \cdots \times I(x_k)\big),
\end{align*}
where $I(x):=(-\infty,x]$ for all $x\in\R$. We denote the set of all such intervals by
\begin{align*}
\I:=\big\{ (-\infty,x]:\, x\in\R\big\}.
\end{align*}

A {\em copula in $\R^k$} for some $k\in\N$ is a cumulative distribution function $C_k\colon [0,1]^k\to [0,1]$ with uniform marginal distributions, that is
\begin{align*}
C_k(1,\dots, 1, u_j,1,\dots, 1)= u_j\qquad
\text{for all }u_j \in [0,1],\, j\in \{1,\dots, k\}.
\end{align*}
We say that a copula $C_k\colon [0,1]^k\to [0,1]$ has a density if there exists a measurable function $c_k\colon [0,1]^k\to \Rp$ satisfying
\begin{align*}
  C_k(u_1,\dots, u_n)=\int_0^{u_1}\dots \int_0^{u_k} c_k(v_1,\dots, v_n)\,dv_n\cdots dv_1
  \qquad\text{for all }u_1,\dots, u_n\in [0,1].
\end{align*}

Sklar's Theorem (see \cite[Th.3.2.1]{MalevergneSornette} or \cite[Th.5.3]{McNeil-etal}) guarantees that for every probability measure $\mu\in
\M(\R^k)$  there exists a copula $C_k$ in $\R^k$ such that
$$
\mu \Big(I_1\times \,\cdots\,\times  I_k\Big)=C_k\Big(\mu_1\big(I_1\big),\ldots, \mu_k\big(I_k\big)\Big)
\qquad\text{for all }I_1,\dots, I_k\in\I,
$$
where $\mu_j:=\mu\circ \pi_{e_j}^{-1}$. If each marginal distribution $\mu_j$ has a continuous cdf, i.e.\ $\mu\in
\M^c(\R^k)$, then the copula $C_k$ is unique.
The converse statement of Sklar's Theorem, which is much easier to prove,
states that for a given copula $C_k$ in $\R^k$ and every product probability measure $\mu\in  \M^{\otimes}(\R^k)$ there exists a measure $\nu\in \M(\R^k)$
satisfying
\begin{align}\label{eq.mu-and-nu}
\nu \Big(I_1\times \,\cdots\,\times  I_k\Big)=C_k\Big(\mu_1\big(I_1\big),\ldots, \mu_k\big(I_k\big)\Big)
\qquad\text{for all }I_1,\dots, I_k\in\I,
\end{align}
where $\mu_j:=\mu\circ \pi_{e_j}^{-1}$. As the left hand side is the cdf of $\nu$, the measure $\nu$ is uniquely determined
by the copula $C_k$ and the product probability measure $\mu$. Thus, a copula $C_k$ in $\R^k$ induces a mapping
\begin{align*}
\hat{C}_k\colon \M^{\otimes}(\R^k)\to  \M(\R^k),
 \qquad \mu\mapsto \hat{C}_k(\mu),
\end{align*}
where $\hat{C}_k(\mu)$ denotes the probability measure $\nu$ satisfying \eqref{eq.mu-and-nu}.

\section{Copulas in Hilbert spaces}\label{se.Hilbert}

As copulas are strongly based on cumulative distribution functions it is rather a finite dimensional concept.
In order to extend it to an infinite dimensional seting, we begin with a minimal requirement.
\begin{definition}
A family $(C_k)_{k\in \N}$ of copulas $C_k$ in $\R^k$ is called {\em consistent}
if it obeys for all $k\in\N$ that
  \begin{align*}
    C_{k+1}(u_1,\dots, u_k,1)= C_k(u_1,\dots, u_k) \qquad \text{for all }
    u_1,\dots, u_k\in [0,1].
\end{align*}
\end{definition}
The following theorem shows that a consistent family of copulas
is sufficient to generalise one direction of Sklar's theorem to the infinite dimensional setting.
\begin{theorem}\label{th.Sklar-infinite}(Sklar's Theorem)\hfill\\
For each probability measure $\mu\in \M(H)$  there exists a family of consistent copulas $(C_k)_{k\in \N}$  satisfying for all  $I_1,\dots, I_k\in \I$ and $k\in\N$:
\begin{align}\label{eq.copula-equ}
  \mu\circ\pi_{e_1,\dots, e_k}^{-1}\Big(I_1\times \cdots \times I_k\Big)
  = C_k\Big(\mu_1(I_1),\dots, \mu_k(I_k)\Big),
\end{align}
where $\mu_j=\mu\circ\pi_{e_j}^{-1}$.
If $\mu\in \M^c(H)$ then the family of consistent copulas $(C_k)_{k\in \N}$ is unique.
\end{theorem}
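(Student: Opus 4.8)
The plan is to reduce the infinite-dimensional statement to repeated application of finite-dimensional Sklar's theorem together with the consistency bookkeeping. For each $k\in\N$, apply the classical Sklar's theorem (quoted in the Preliminaries) to the pushforward measure $\mu\circ\pi_{e_1,\dots,e_k}^{-1}\in\M(\R^k)$: this yields a copula $C_k$ in $\R^k$ satisfying
\begin{align*}
  \mu\circ\pi_{e_1,\dots, e_k}^{-1}\big(I_1\times\cdots\times I_k\big)
  = C_k\big(\mu_1(I_1),\dots,\mu_k(I_k)\big)
  \qquad\text{for all }I_1,\dots,I_k\in\I,
\end{align*}
where one checks that the $j$-th marginal of $\mu\circ\pi_{e_1,\dots,e_k}^{-1}$ is exactly $\mu_j=\mu\circ\pi_{e_j}^{-1}$, independent of $k$, so the arguments $\mu_j(I_j)$ on the right-hand side are the same for every $k$. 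The first thing I would check carefully is that $\pi_{e_1,\dots,e_k}$ is $\Borel(H)/\Borel(\R^k)$-measurable so that the pushforward is well defined, which is immediate since each coordinate $h\mapsto\scapro{h}{e_k}$ is continuous.

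The main issue is that finite-dimensional Sklar only gives \emph{some} copula at each level, not a consistent family, and uniqueness holds only under the continuity hypothesis. So the plan splits into two cases. If $\mu\in\M^c(H)$, then each $\mu\circ\pi_{e_1,\dots,e_k}^{-1}$ lies in $\M^c(\R^k)$ (its marginals are the $\mu_j$, which have continuous cdf by assumption), hence $C_k$ is \emph{uniquely} determined. Consistency is then automatic: both $C_{k+1}(u_1,\dots,u_k,1)$ and $C_k(u_1,\dots,u_k)$ are, by the displayed identity evaluated with $I_{k+1}=(-\infty,+\infty)$ realised as a limit $I_{k+1}=(-\infty,x]$ with $x\to\infty$ (using right-continuity/monotone convergence of measures and $\mu_{k+1}((-\infty,x])\to 1$), the unique copula representing the law of $(\scapro{\cdot}{e_1},\dots,\scapro{\cdot}{e_k})$; by uniqueness they coincide. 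This simultaneously proves existence-with-consistency and uniqueness in the continuous case.

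For general $\mu\in\M(H)$ (no continuity), uniqueness fails, so I must actively \emph{construct} a consistent family rather than read it off. The cleanest route is to perturb $\mu$: pick any $\mu'\in\M^c(H)$ whose marginals' cdfs are strictly increasing — e.g.\ convolve each coordinate with an independent centered Gaussian of variance $\sigma_k^2$ with $\sum_k\sigma_k^2<\infty$ so the convolved measure still lives on $H\cong\ell^2$ — no, more simply: use the standard finite-dimensional fact that one can choose the copulas $C_k$ by the explicit formula
\[
  C_k(u_1,\dots,u_k)=\mu\circ\pi_{e_1,\dots,e_k}^{-1}\big(I(q_1(u_1))\times\cdots\times I(q_k(u_k))\big),
\]
where $q_j(u):=\inf\{x:\mu_j((-\infty,x])\ge u\}$ is the quantile function of $\mu_j$ (this is the construction underlying the proof of finite-dimensional Sklar). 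Because $q_j$ does not depend on $k$, setting $u_{k+1}=1$ gives $q_{k+1}(1)=+\infty$ and hence $I(q_{k+1}(1))=\R$, so $C_{k+1}(u_1,\dots,u_k,1)=C_k(u_1,\dots,u_k)$ directly from the formula; consistency is built in by construction, and \eqref{eq.copula-equ} follows from the identity $\mu_j\big((-\infty,q_j(u_j)]\big)=u_j$ valid at continuity points of $F_{\mu_j}$, extended to all $u_j$ by a limiting argument. I expect the only genuinely delicate point to be verifying that this quantile-transform formula indeed produces a bona fide copula (uniform marginals, $k$-increasing) in the non-continuous case and that \eqref{eq.copula-equ} holds for \emph{all} $I_j\in\I$ rather than only on a dense set — this is exactly the content of the finite-dimensional Sklar theorem, so it can be invoked, and the novelty here is solely the observation that the $k$-independence of the $q_j$ yields consistency for free.
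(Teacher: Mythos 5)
Your reduction to finite-dimensional Sklar and your treatment of the continuous case are correct and follow essentially the paper's route: the paper likewise invokes Sklar in $\R^k$ for each $k$ and observes that the marginal relation $\mu\circ\pi_{e_1,\dots,e_{k+1}}^{-1}(I_1\times\cdots\times I_k\times\R)=\mu\circ\pi_{e_1,\dots,e_k}^{-1}(I_1\times\cdots\times I_k)$ forces $C_{k+1}(u_1,\dots,u_k,1)=C_k(u_1,\dots,u_k)$ on the product of the ranges of the marginal cdfs; when $\mu\in\M^c(H)$ these ranges contain $(0,1)$ and the Lipschitz continuity of copulas upgrades this to all of $[0,1]^k$, which also yields uniqueness. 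Your ``uniqueness implies consistency'' phrasing is a clean way of packaging the same argument.

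The gap is in the general (non-continuous) case. The quantile-transform formula
\[
C_k(u_1,\dots,u_k)=\mu\circ\pi_{e_1,\dots,e_k}^{-1}\big(I(q_1(u_1))\times\cdots\times I(q_k(u_k))\big)
\]
is \emph{not} a copula when some $F_j:=F_{\mu_j}$ is discontinuous: its $j$-th marginal is $u_j\mapsto F_j(q_j(u_j))$, and if $F_j$ jumps from $a$ to $b$ at $x_0$, then $q_j(u)=x_0$ and $F_j(q_j(u))=b\neq u$ for every $u\in(a,b)$, so the uniform-marginal requirement fails on every gap of the range of $F_j$. This is not ``exactly the content of the finite-dimensional Sklar theorem'': Sklar guarantees that the sub-copula your formula defines on $\prod_j\overline{\mathrm{Ran}(F_j)}$ admits \emph{some} copula extension, but it does not assert that your formula is that extension; and if you replace the formula by an unspecified Sklar extension at each level $k$, you lose precisely the $k$-independence that was supposed to give consistency for free. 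As written, your argument produces a consistent family of sub-copulas satisfying \eqref{eq.copula-equ}, not a consistent family of copulas. The missing step --- and the one the paper's proof addresses with its explicit redefinition of $C_k$ at arguments $u_j\notin\mathrm{Ran}(F_j)$ --- is to choose the extensions off the ranges coherently across $k$. Your strategy is repairable: extend each sub-copula by the standard coordinatewise multilinear (checkerboard) interpolation across the gaps of each $\mathrm{Ran}(F_j)$; since the gap structure in coordinate $j$ does not depend on $k$ and $u_{k+1}=1$ always lies in $\overline{\mathrm{Ran}(F_{k+1})}$, this extension commutes with setting the last argument equal to $1$ and so preserves consistency while producing a genuine copula. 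But that step must be stated and verified; it does not follow from the formula you wrote down.
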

\begin{proof}
Sklar's Theorem in finite dimensions guarantees that for each $k\in\N$ there exists a copula $C_k$ in $\R^k$ satisfying \eqref{eq.copula-equ}.
For each $k\in\N$ and $I_1,\dots, I_k\in\I$ we have
\begin{align*}
  \mu\circ \pi_{e_1,\dots, e_k,e_{k+1}}^{-1}(I_1\times\dots \times I_k\times\R)=
    \mu\circ \pi_{e_1,\dots, e_k}^{-1}(I_1\times\dots \times I_k).
\end{align*}
Consequently, it follows from \eqref{eq.copula-equ} that for all
\begin{align*}
(u_1,\dots, u_k)\in R_k:=
\bigtimes_{j=1}^{k} \big\{\mu\circ \pi_{e_j}^{-1}(-\infty,x]:\, x\in\R\big\}
\end{align*}
one obtains
$  C_{k+1}(u_1,\dots, u_k,1)= C_k(u_1,\dots, u_k) $. If $u_j\notin
 \big\{\mu\circ \pi_{e_j}^{-1}(-\infty,x]:\, x\in\R \big\}$
for some $j\in\N$ then define for each $k\ge j$
\begin{align*}
C_k(u_1,\dots, u_{j-1},u_j,u_{j+1},\dots, u_k)
:= C_k(u_1,\dots, u_{j-1},\tilde{u}_j,u_{j+1},\dots, u_k),
\end{align*}
where $\tilde{u}_j:=\sup\{\mu\circ \pi_{e_j}^{-1}(-\infty, x):\,
\mu\circ \pi_{e_j}^{-1}(-\infty, x)<u_j, x\in\R\}$.
\end{proof}

Applying Theorem \ref{th.Sklar-infinite} to the finite dimensional situation in $\R^n$  shows that each
copula $C_n$ in $\R^n$ defines a consistent family $(C_k)_{k=1,\dots, n}$. Before we study the converse of Sklar's Theorem we present some recipes to construct a family of consistent copulas in a Hilbert space.
\begin{lemma}\label{le.copulas-with-2}
  Let $\phi_j\colon [0,1]^2\to\Rp$ be the continuous density of a copula in $\R^2$ for each $j\in\N$. Then,
  by defining for every $u_1,\dots, u_k\in [0,1]$ and $k\in\N\setminus\{1\}$ the function
\begin{align}\label{eq.def-consistent-2}
  C_k(u_1,\dots, u_k)=\int_0^{u_1}\cdots \int_0^{u_k}
   \phi_1(v_1,v_2)\phi_2(v_2,v_3)\cdots \phi_{k-1}(v_{k-1},v_k)\, dv_k\cdots dv_1
\end{align}
one obtains a family $(C_k)_{k\in\N}$ of consistent copulas.
\end{lemma}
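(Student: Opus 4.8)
The plan is to check the two requirements on the family $(C_k)_{k\in\N}$ — that each $C_k$ is a copula in $\R^k$ and that the family is consistent — reducing everything to one elementary fact about bivariate copula densities. Since each $\phi_j$ is the continuous density of a copula in $\R^2$, its uniform-marginal property reads $\int_0^{a}\int_0^1\phi_j(v_1,v_2)\,dv_2\,dv_1=a$ and $\int_0^1\int_0^{b}\phi_j(v_1,v_2)\,dv_2\,dv_1=b$ for all $a,b\in[0,1]$. Swapping the order of integration in the second identity (Tonelli, as $\phi_j\ge0$) and differentiating each identity with respect to its upper limit of integration, together with the fact that continuity of $\phi_j$ on the compact square makes $s\mapsto\int_0^1\phi_j(s,t)\,dt$ and $t\mapsto\int_0^1\phi_j(s,t)\,ds$ continuous (dominated convergence), upgrades the resulting almost-everywhere statements to
\begin{align*}
  \int_0^1\phi_j(s,t)\,ds=1=\int_0^1\phi_j(s,t)\,dt\qquad\text{for all }s,t\in[0,1],\ j\in\N.
\end{align*}
I also record that $c_k(v_1,\dots,v_k):=\phi_1(v_1,v_2)\phi_2(v_2,v_3)\cdots\phi_{k-1}(v_{k-1},v_k)$ is non-negative and continuous, hence a bounded measurable density on $[0,1]^k$, so Tonelli's theorem permits reordering every iterated integral appearing below.

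Next I would prove consistency. Fixing $k\in\N$, in the integral defining $C_{k+1}(u_1,\dots,u_k,1)$ the innermost integration is over $v_{k+1}\in[0,1]$; carrying it out first and using $\int_0^1\phi_k(v_k,v_{k+1})\,dv_{k+1}=1$ deletes the last factor and leaves exactly the integral defining $C_k(u_1,\dots,u_k)$. Setting all $u_i=1$ and iterating this identity down to $C_2(1,1)=\int_0^1\int_0^1\phi_1(v_1,v_2)\,dv_2\,dv_1=1$ yields $\int_{[0,1]^k}c_k\,d\leb=1$. Hence $c_k\,d\leb$ is a probability measure on $[0,1]^k$ and $C_k$ is its cumulative distribution function restricted to $[0,1]^k$, which is the first half of ``$C_k$ is a copula''. (For $k=1$ one takes $C_1(u):=u$, the unique copula in $\R^1$, which is compatible with $C_2(u_1,1)=u_1$ established below.)

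It remains to verify the uniform marginals of $C_k$, i.e.\ $C_k(1,\dots,1,u_j,1,\dots,1)=u_j$ for each $j\in\{1,\dots,k\}$. In that iterated integral I would first integrate out the variables $v_1,v_2,\dots,v_{j-1}$ in this order, each step invoking $\int_0^1\phi_i(v_i,v_{i+1})\,dv_i=1$ to remove the current leftmost factor, and then integrate out $v_k,v_{k-1},\dots,v_{j+1}$ in this order, each step invoking $\int_0^1\phi_i(v_i,v_{i+1})\,dv_{i+1}=1$ to remove the current rightmost factor; since the integrand has the ``chain'' form $\phi_1(v_1,v_2)\cdots\phi_{k-1}(v_{k-1},v_k)$, these two collapses are independent and meet at $v_j$, leaving $\int_0^{u_j}1\,dv_j=u_j$. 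The boundary cases $j=1$ and $j=k$ are the degenerate instances in which only one of the two collapses takes place. Together with the previous paragraph this shows every $C_k$ is a copula, and consistency has already been established, so $(C_k)_{k\in\N}$ is a consistent family of copulas.

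I do not anticipate a genuine obstacle. The only points requiring attention are the systematic (but legitimate, since $c_k\ge0$) use of Tonelli to reorder integrals, the bookkeeping in the last step of which variable is eliminated at which stage, and the passage from almost-everywhere to everywhere in the marginal identities for the $\phi_j$ — the one place where the continuity hypothesis is actually used.
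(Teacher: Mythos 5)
Your proof is correct and follows essentially the same route as the paper's: both arguments reduce everything to the identity $\int_0^1\phi_j(s,t)\,dt=1=\int_0^1\phi_j(s,t)\,ds$ for all $s,t$ (you obtain it by differentiating the marginal identity of the bivariate copula, the paper by differentiating the copula itself — the same use of continuity), and then collapse the chain $\phi_1(v_1,v_2)\cdots\phi_{k-1}(v_{k-1},v_k)$ via Tonelli to verify normalisation, uniform marginals and consistency. No gaps.
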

\begin{proof}
Let $\Phi$ denote a copula in $\R^2$ with continuous density $\phi$ with partial derivatives $\Phi_u$ and $\Phi_v$ with respect to the first and second argument, respectively. As $\phi$ is continuous the second derivatives satisfy $\Phi_{u,v}=\Phi_{v,u}=\phi$. It follows for each $u\in [0,1]$ that
\begin{align*}
 \Phi_u(u,1)&=\lim_{\epsilon\to 0} \frac{\Phi(u+\epsilon,1)-\Phi(u,1)}{\epsilon}=
    \lim_{\epsilon\to 0} \frac{u+\epsilon-u}{\epsilon}= 1,\\
 \Phi_u(u,0)&=\lim_{\epsilon\to 0} \frac{\Phi(u+\epsilon,0)-\Phi(u,0)}{\epsilon}=
    \lim_{\epsilon\to 0} \frac{0}{\epsilon}= 0,\\
\end{align*}
and analogously $\Phi_v(1,v)=u$ and $\Phi_v(0,v)=0$ for all $v\in [0,1]$. Consequently, we obtain for each $u\in [0,1]$ that
\begin{align}\label{eq.part-int=1}
  \int_0^1 \phi(u,v)\,dv
  =\int_0^1 \Phi_{u,v}(u,v)\,dv
  =   \Phi_u(u,1)-\Phi_u(u,0)=1.
\end{align}
For fixed $k\in\N\setminus\{1\}$ it follows from its definition in \eqref{eq.def-consistent-2} that $C_k(u_1,\dots, u_k)=0$
if at least one of the arguments $u_j$ equals $0$.
For each $u_j\in [0,1]$ and $j\in\{2,\dots, k\}$, Tonelli's theorem and \eqref{eq.part-int=1} imply  that
\begin{align}\label{eq.aux1}
 &C_k(1,\dots, 1,u_j,1,\dots, 1)\notag\\
 &=\int_0^{1}\bigg(\int_0^1 \phi_1(v_1,v_2)\bigg( \dots \int_0^{u_j} \phi_{j-1}(v_{j-1},v_j)\bigg(\cdots \int_0^1
 \phi_{k-1}(v_{k-1},v_k)\,dv_k\bigg)\cdots dv_j\bigg)\cdots \bigg)dv_1\notag\\
 &=\int_0^{1}\bigg(\int_0^1 \phi_1(v_1,v_2)\bigg( \dots \int_0^{u_j} \phi_{j-1}(v_{j-1},v_j) dv_j\bigg)\cdots \bigg)dv_1\notag\\
 &=\int_0^{u_j}\bigg(\int_0^1 \phi_{j-1}(v_{j-1},v_j)\bigg( \dots \int_0^1 \phi_1(v_1,v_2) dv_1\bigg)\cdots \bigg)dv_j\notag\\
&= \int_0^{u_j}\, dv_j\notag\\
&=u_j.
\end{align}
Define a function
\begin{align*}
\psi\colon [0,1]^k\to [0,1],\qquad \psi_k(v_1,\dots, v_k)=\phi_1(v_1,v_2)\cdot\ldots\cdot \phi_{k-1}(v_{k-1},v_k).
\end{align*}
By taking $u_j=1$ in \eqref{eq.aux1} we obtain
\begin{align*}
  \int_{\R^{k}}\psi_k(v_1,\dots, v_k)\,dv_k\cdots dv_1=1,
\end{align*}
which shows that $\Psi_k$ is a probability density function on $[0,1]^k$. Since $C_k$ is the cumulative distribution function of $\psi_k$, we have established that $C_k$ is a copula in $\R^k$.

It follows from \eqref{eq.part-int=1} for every $u_1,\dots, u_{k} \in [0,1]$ and $k\in \N$ that
\begin{align*}
&C_{k+1}(u_1,\dots, u_{k},1)\\
&= \int_0^{u_1}\cdots \int_0^{u_{k}}
   \phi_1(v_1,v_2)\cdots \phi_{k-1}(v_{k-1},v_{k})\int_0^1\phi_{k}(v_{k},v_{k+1})\,dv_{k+1}\,dv_{k}\cdots dv_1\\
&= \int_0^{u_1}\cdots \int_0^{u_{k}}
  \phi_1(v_1,v_2)\cdots \phi_{k-1}(v_{k-1},v_{k})\,dv_{k} \cdots dv_1\\
&= C_{k}(u_1,\dots, u_{k}),
\end{align*}
which completes the proof.
\end{proof}
\begin{example}\label{ex.2-normal}
The density $\phi_k\colon [0,1]^2\to\R$ of the Gaussian copula in $\R^2$ with correlation parameter
$\rho_k\in (-1,1)$ for every $k\in\N$ is given by
\begin{align*}
\phi_k(v_1,v_2)
&=\frac{1}{\sqrt{1-\rho_k^2}}
\exp\left(  -\frac{\rho_k}{2(1-\rho_k^2)}
\begin{pmatrix}G^{-1}(v_1) \\ G^{-1}(v_2) \end{pmatrix}^T
\begin{pmatrix}   \rho_k &  -1 \\ -1 & \rho_k    \end{pmatrix}
\begin{pmatrix}G^{-1}(v_1) \\ G^{-1}(v_2) \end{pmatrix}
 \right)\\
&=\frac{1}{\sqrt{1-\rho_k^2}}
\exp\left(  -\frac{ \rho_k^2 (G^{-1}(v_1))^2 - 2\rho_k G^{-1}(v_1)G^{-1}(v_2)+ \rho_k^2
  (G^{-1}(v_2))^2}{2(1-\rho_k^2)}
\right),
\end{align*}
where $G\colon\R\to [0,1]$ is the cumulative distribution function of the standard normal distribution.
\end{example}

\begin{lemma}
Let $\gamma$ be a cylindrical probability measure on $H$  and let $F_{i,i+1\dots ,i+j}$ denote the cumulative distribution function of $\gamma\circ \pi_{e_i,\dots, e_{i+j}}^{-1}$ for every $i\in\N$ and $j\in \N_0$. Then
\begin{align}\label{eq.def-copula-dis}
  C_k(u_1,\dots, u_k):=F_{1,\dots, k}\Big(F^{-1}_1(u_1),\dots, F^{-1}_k(u_k)\Big)
  \qquad\text{for }u_1,\dots, u_k\in [0,1],
\end{align}
defines a family $(C_k)_{k\in\N}$ of consistent copulas.
\end{lemma}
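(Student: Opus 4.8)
The plan is to reduce the statement to finite-dimensional Sklar's theorem applied to the finite-rank push-forwards of $\gamma$, together with a marginalisation argument that parallels the proof of Theorem~\ref{th.Sklar-infinite}. The first thing I would do is unwind the cylindrical hypothesis: by definition a cylindrical probability measure restricts to an honest (countably additive) probability measure on every finite-dimensional cylindrical $\sigma$-algebra, so for each $k\in\N$ the set function $\nu_k:=\gamma\circ\pi_{e_1,\dots,e_k}^{-1}$ is a genuine probability measure on $(\R^k,\Borel(\R^k))$. Hence $F_{1,\dots,k}$ is a bona fide $k$-dimensional cumulative distribution function whose $j$-th one-dimensional margin is exactly $F_j$, and, since $\pi_{e_1,\dots,e_k}=p_k\circ\pi_{e_1,\dots,e_{k+1}}$ for the coordinate projection $p_k\colon\R^{k+1}\to\R^k$ that deletes the last entry, the family $(\nu_k)_{k\in\N}$ is projective: $\nu_{k+1}(B\times\R)=\nu_k(B)$ for all $B\in\Borel(\R^k)$, equivalently $\lim_{t\to\infty}F_{1,\dots,k+1}(x_1,\dots,x_k,t)=F_{1,\dots,k}(x_1,\dots,x_k)$.

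Next I would check that each $C_k$ is a copula in $\R^k$. Writing $F_j^{-1}(u):=\inf\{x\in\R:F_j(x)\ge u\}$ for the (nondecreasing, left-continuous) quantile function, $C_k$ is the composition of the $k$-dimensional cdf $F_{1,\dots,k}$ with maps that are nondecreasing in each coordinate; it is therefore $k$-increasing, vanishes when some argument equals $0$ (reading $F_j^{-1}(0)=-\infty$), equals $1$ at $(1,\dots,1)$, and is right-continuous in each argument, the last point using right-continuity of $F_{1,\dots,k}$ together with the fact that $\nu_k$ assigns zero mass to a slab over any interval on which the corresponding $F_j$ is constant. For the uniform-margin condition one computes $C_k(1,\dots,1,u_j,1,\dots,1)=F_j\big(F_j^{-1}(u_j)\big)$, which equals $u_j$ precisely when $F_j$ is continuous — this is the classical explicit form of Sklar's theorem, and when some margin $F_j$ fails to be continuous one repairs $C_k$ off the range of $F_j$ verbatim as in the proof of Theorem~\ref{th.Sklar-infinite}. (Equivalently, one may simply invoke finite-dimensional Sklar's theorem for $\nu_k$ and identify the resulting copula with the stated formula.)

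Then I would verify consistency. Fix $k\in\N$ and $u_1,\dots,u_k\in[0,1]$, put $y_j:=F_j^{-1}(u_j)$ and $a:=F_{k+1}^{-1}(1)\in(-\infty,+\infty]$. Then
\[
C_{k+1}(u_1,\dots,u_k,1)=F_{1,\dots,k+1}(y_1,\dots,y_k,a)=\nu_{k+1}\big((-\infty,y_1]\times\dots\times(-\infty,y_k]\times(-\infty,a]\big),
\]
interpreting $(-\infty,a]$ as $\R$ and the value as a limit when $a=+\infty$. Since the last-coordinate margin of $\nu_{k+1}$ has cdf $F_{k+1}$ and $F_{k+1}(a)=1$ (as a limit when $a=+\infty$), the slab $\R^k\times(a,\infty)$ is $\nu_{k+1}$-null, so the right-hand side equals $\nu_{k+1}\big((-\infty,y_1]\times\dots\times(-\infty,y_k]\times\R\big)$, which by the projectivity established above equals $\nu_k\big((-\infty,y_1]\times\dots\times(-\infty,y_k]\big)=F_{1,\dots,k}(y_1,\dots,y_k)=C_k(u_1,\dots,u_k)$. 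This is exactly the required relation $C_{k+1}(u_1,\dots,u_k,1)=C_k(u_1,\dots,u_k)$.

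I expect the only genuinely delicate point to be the uniform-margin identity $F_j\circ F_j^{-1}=\mathrm{id}$ in the second step: the formula produces a copula directly only when the one-dimensional marginals of $\gamma$ have continuous cdf, and otherwise one must fall back on the off-range modification used for Theorem~\ref{th.Sklar-infinite} (so the hypothesis should be read accordingly). Everything else — honesty and projectivity of the finite-rank marginals, right-continuity and the remaining copula axioms for $C_k$, and the marginalisation in the consistency step — is routine bookkeeping with cdfs and quantile functions.
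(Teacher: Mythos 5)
Your proposal is correct and follows essentially the same route as the paper: the paper likewise takes for granted that the explicit formula $C_k=F_{1,\dots,k}\circ(F_1^{-1},\dots,F_k^{-1})$ ``is well known'' to define a copula in $\R^k$, and proves consistency by exactly your marginalisation argument, realising $\gamma\circ\pi_{e_1,\dots,e_{k+1}}^{-1}$ via a cylindrical random variable $\Gamma$ and letting the last coordinate tend to $+\infty$. Your extra remark that the uniform-margin identity $F_j\circ F_j^{-1}=\mathrm{id}$ requires continuity of the one-dimensional marginals (and otherwise needs the off-range repair from Theorem~\ref{th.Sklar-infinite}) is a point the paper silently glosses over, so it is a welcome clarification rather than a deviation.
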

\begin{proof}
It is well known that for each $k\in\N$ the relation \eqref{eq.def-copula-dis}
defines a copula  $C_k$ in $\R^k$. Let $(\Omega,\A,P)$ be a probability space and
 $\Gamma\colon H\to L_P^0(\Omega,\R)$ be a cylindrical random variable with cylindrical distribution $\gamma$. It follows for each $u_1,\dots ,u_{k}\in [0,1]$ and $k\in\N$ that
\begin{align*}
 C_{k+1}(u_1,\dots, u_{k},1)
 &=\lim_{x\to\infty} F_{1,\dots, k+1}\Big(F^{-1}_1(u_1),\dots, F^{-1}_{k}(u_{k}),x \Big)\\
 &=\lim_{x\to\infty} P\Big( \Gamma e_1\le F^{-1}_1(u_1),\dots, \Gamma e_k\le F^{-1}_k(u_{k}), \Gamma e_{k+1}\le x\Big)\\
 &=P\Big( \Gamma e_1\le F^{-1}_1(u_1),\dots, \Gamma e_k\le F^{-1}_k(u_{k}), \Gamma e_{k+1}\le \infty\Big)\\
 &=P\Big( \Gamma e_1\le F^{-1}_1(u_1),\dots, \Gamma e_k\le F^{-1}_{k}(u_{k})\Big)\\
 &=  C_k(u_1,\dots, u_{k}),
\end{align*}
which completes the proof.
\end{proof}
\begin{example}
  Let $\gamma$ be a centred, Gaussian cylindrical distribution with covariance operator $Q\in \L(H,H)$, i.e.\ $Q$ is a positive and symmetric linear operator but not  necessarily trace-class. Then $\gamma\circ \pi_{e_i,\dots, e_{i+j}}^{-1}$ for $i\in\N$ and $j\in \N_0$
  is a Normal distribution on $\Borel(\R^{j+ 1})$ with expectation $0$
  and covariance matrix $\big( \scapro{Qe_m}{e_n}\big)_{m,n=i,\dots, i+j}$.
  In particular, if $\gamma$ is the canonical Gaussian cylindrical distribution on $H$   it follows that $Q=\Id$ and that $F_{i,i+1\dots ,i+j}$ is the cumulative distribution function of the standard normal distribution on $\Borel(\R^{j+1})$. This example will show later that the converse of Sklar's theorem is not true in Hilbert spaces.
\end{example}

\section{Copulas and the product space}\label{se.product}

In the following we are concerned with the converse implication of Sklar's theorem. For this purpose, recall the isomorphism $\ii$ defined in \eqref{eq.H-and-l2} which maps the Hilbert space $H$
to the space $\ell^2$ of square summable sequences.
%\begin{align*}
%\ell^2:=\Big\{ (x_k)_{k\in\N}:\, x_k\in\R, \sum_{k=1}^\infty x^2_k<\infty\Big\}.
%\end{align*}
The space $\ell^2$ is equipped with the standard inner product
$\scapro{\cdot}{\cdot}$, induced norm $\norm{\cdot}_{\ell^2}$ and with the
standard orthonormal basis $(f_k)_{k\in\N}$.

Define the product space $\R^{\infty}$ as the space of all functions from
$\N$ to $\R$. Thus, $\R^\infty $ can be identified by
\begin{align*}
\R^\infty:= \bigtimes_{k=1}^\infty \R:= \big\{(x_k)_{k\in\N}:\, x_k\in\R\big\}.
\end{align*}
The canonical projections are defined
for $i, j\in\N$ and $i\le j$ by
\begin{align*}
\kappa_{i,i+1,\dots, j}\colon \R^\infty \to \bigtimes_{k=i}^j \R,\qquad
\kappa_{i,i+1,\dots, j}\big((x_k)_{k\in\N}\big)=\big( x_i,x_{i+1},\dots, x_j\big).
\end{align*}
 The product space $\R^\infty$ is
equipped with the product $\sigma$-algebra $\Borel^\infty$ which is defined by
\begin{align*}
\Borel^\infty:=\bigotimes_{k=1}^\infty \Borel(\R):=\sigma\big(\big\{\{ (x_k)_{k\in\N}\in\R^\infty:\, x_j\in B\},\, B\in\Borel(\R), j\in\N \big\}\Big);
\end{align*}
see for example \cite[page 485]{Billingsley}.

\begin{proposition}\label{pro.product}
For every family $(C_k)_{k\in\N}$ of consistent copulas  and $\mu\in \M^{\otimes}(H)$ there exists
a unique probability measure $\nu$ on $(\R^\infty,\Borel^\infty)$ satisfying
 for each $I_1,\dots, I_k\in \I$ and $k\in\N$:
 \begin{align}\label{eq.mu-nu}
\nu\circ \kappa_{1,\ldots ,k} ^{-1}\Big(I_1\times \,\cdots\,\times  I_k\Big)=C_k\Big(\mu_1(I_1),\ldots, \mu_k(I_k)\Big),
\end{align}
where $\mu_j:=\mu\circ \pi_{e_j}^{-1}$.
\end{proposition}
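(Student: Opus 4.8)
The plan is to construct $\nu$ via the Kolmogorov extension theorem applied to the family of finite-dimensional distributions $\nu_k := \hat C_k(\mu)$ on $\R^k$, where $\hat C_k$ is the mapping associated to the copula $C_k$ as explained after \eqref{eq.mu-and-nu}. Since $\mu \in \M^{\otimes}(H)$, its marginals $\mu_j$ form a consistent family of one-dimensional laws, and the product measure $\mu_1 \otimes \cdots \otimes \mu_k$ lies in $\M^{\otimes}(\R^k)$, so $\nu_k = \hat C_k(\mu_1\otimes\cdots\otimes\mu_k)$ is well defined and is by construction the unique probability measure on $(\R^k,\Borel(\R^k))$ whose cdf at $(x_1,\dots,x_k)$ equals $C_k(\mu_1(I(x_1)),\dots,\mu_k(I(x_k)))$, i.e.\ $\nu_k$ satisfies \eqref{eq.mu-nu} with $\kappa_{1,\dots,k}$ replaced by the identity. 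The marginal $\mu_j$ being the $j$-th marginal of the product measure is essential here: it makes the right-hand side of \eqref{eq.mu-nu} genuinely depend on $\mu$ only through its one-dimensional marginals, which is exactly the data the copula needs.

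First I would verify the Kolmogorov consistency condition for $(\nu_k)_{k\in\N}$, namely $\nu_{k+1}\circ(\text{projection onto first }k\text{ coordinates})^{-1} = \nu_k$. Because cdfs on $\R^{k+1}$ are determined by their values on boxes, and because
\[
\nu_{k+1}\big(I(x_1)\times\cdots\times I(x_k)\times \R\big)
= \lim_{x\to\infty} C_{k+1}\big(\mu_1(I(x_1)),\dots,\mu_k(I(x_k)),\mu_{k+1}(I(x))\big),
\]
the continuity of $C_{k+1}$ from above in its last argument together with $\mu_{k+1}(I(x))\to 1$ as $x\to\infty$ reduces this to $C_{k+1}(u_1,\dots,u_k,1)$, which equals $C_k(u_1,\dots,u_k)$ by consistency of the family. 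Hence the projected measure has the same cdf as $\nu_k$ and therefore coincides with $\nu_k$. Kolmogorov's extension theorem on the countable product $(\R^\infty,\Borel^\infty)$ then yields a unique probability measure $\nu$ with $\nu\circ\kappa_{1,\dots,k}^{-1}=\nu_k$ for every $k$, which is precisely \eqref{eq.mu-nu}.

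For uniqueness of $\nu$ among measures satisfying \eqref{eq.mu-nu}, I would note that \eqref{eq.mu-nu} prescribes $\nu\circ\kappa_{1,\dots,k}^{-1}$ on all boxes $I_1\times\cdots\times I_k$ with $I_j\in\I$; since this family of boxes is a $\pi$-system generating $\Borel(\R^k)$ and contains an exhausting sequence, $\nu\circ\kappa_{1,\dots,k}^{-1}$ is determined for each $k$, and then $\nu$ itself is determined on the generating $\pi$-system of cylinder sets of $\Borel^\infty$, hence everywhere by Dynkin's lemma. The only genuinely delicate point is the passage to the limit in the consistency check: one must be careful that the values $u_{k+1}=\mu_{k+1}(I(x))$ range only over the (possibly proper) subset $\{\mu_{k+1}(I(x)):x\in\R\}$ of $[0,1]$, not over all of $[0,1]$, but monotonicity of $C_{k+1}$ in the last slot together with right-continuity and the normalisation $C_{k+1}(u_1,\dots,u_k,1)=C_k(u_1,\dots,u_k)$ handles this; this is exactly the subtlety already addressed in the proof of Theorem \ref{th.Sklar-infinite}. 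I expect this to be the main obstacle, everything else being a routine application of Kolmogorov extension and $\pi$-$\lambda$ arguments.
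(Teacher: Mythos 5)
Your proposal is correct and follows essentially the same route as the paper: construct the finite-dimensional laws $\nu_k$ on $\R^k$ from the converse of Sklar's theorem, verify Kolmogorov consistency via $C_{k+1}(u_1,\dots,u_k,1)=C_k(u_1,\dots,u_k)$ (passing to the limit $x\to\infty$ exactly as you describe), extend to $(\R^\infty,\Borel^\infty)$, and obtain uniqueness from a $\pi$-system argument on cylinder sets. The only cosmetic difference is that the paper additionally builds measures $\nu_{t_1,\dots,t_k}$ indexed by arbitrary finite tuples to match the formulation of Kolmogorov's theorem it cites, whereas you invoke the sequential version directly, which suffices for a countable product.
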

\begin{proof}
Sklar's Theorem in $\R^k$ implies for each $k\in\N$  that there exists
a probability measure $\nu_{1,2,\dots, k}\in \M(\R^k)$ satisfying  for each $I_1,\dots, I_k\in \R$:
\begin{align}\label{eq.finite-Sklar-marginals}
  \nu_{1,2,\dots, k}\Big(I_1\times \,\cdots\,\times  I_k\Big)=C_k\Big(\mu_1(I_1),\ldots, \mu_k(I_k)\Big).
\end{align}
In order to apply Kolmogorov's consistency theorem we define a family $\{\nu_{t_1,\dots, t_k}:\, t_i\in\N \text{ pairwise disjoint}, k\in\N\}$ of probability measures in the following way: if $t_1<t_2<\dots < t_k$ and $B_{t_1},\dots, B_{t_k}\in\Borel(\R)$ then
\begin{align*}
\nu_{t_1,\dots, t_k}\big(B_{t_1}\times\cdots \times B_{t_k}\big)
:&=\nu_{1,2,\dots, t_1-1,t_1,t_1+1,\dots, t_k}\big(\R\times\dots \times \R\times B_{t_1}\times \R\times \dots \times B_{t_k}\big).
\end{align*}
This set function $\nu_{t_1,\dots, t_k}$ is uniquely extended to a measure $\nu_{t_1,\dots, t_k}$ in $\M(\R^k)$.
For arbitrary but mutually disjoint $t_1,\dots, t_k \in \N$ let $\chi$ denote the permutation of $\{t_1,\dots, t_k\}$ such that $\chi(t_1)<\dots <\chi(t_k)$ and
define
\begin{align*}
\nu_{t_1,\dots, t_k}\big(B_{t_1}\times\cdots \times B_{t_k}\big):=
\nu_{\chi(t_1),\dots, \chi(t_k)}\big(B_{\chi(t_1)}\times\cdots \times B_{\chi(t_k)}\big).
\end{align*}
Again, this set function $\nu_{t_1,\dots, t_k}$ is uniquely extended to a measure $\nu_{t_1,\dots, t_k}$ in $\M(\R^k)$.
The consistency of the copulas imply
for every $B_1,\dots, B_k\in\Borel(\R)$ and $k\in\N$ that
\begin{align*}
\nu_{1,2,\dots, k,k+1}(B_1\times \,\dots\,\times B_k\times \R)
= \nu_{1,2,\dots, k}(B_1\times \,\dots\,\times B_k).
\end{align*}
Consequently, we obtain  for $t_1<\dots <t_k$  that
\begin{align*}
&\nu_{t_1,\dots, t_k}\big(B_{t_1}\times\cdots \times B_{t_{k-1}}\times\R\big)\\
&=\nu_{1,2,\dots,t_{k-1}-1, t_{k-1},t_{k-1}+1,\dots, t_k}\big(\R\times\dots \times \R\times B_{t_{k-1}}\times \R\times \dots \times\R\big)\\
&=\nu_{1,2,\dots,t_{k-1}-1, t_{k-1}}\big(\R\times\dots \times \R\times B_{t_{k-1}})\\
&=\nu_{t_1,\dots, t_{k-1}}\big(B_{t_1}\times\cdots \times B_{t_{k-1}}\big).
\end{align*}
It follows that the family $\{\nu_{t_1,\dots, t_k}:\,t_k\in\N\text{ pairwise disjoint},k\in\N\}$
satisfies the conditions in Kolmogorov's consistency theorem which implies that there exists a unique probability measure $\nu$ on the measurable space  $(\R^\infty,\Borel^\infty)$ such that
the coordinate process $(\kappa_k:\, k\in\N)$ has $\nu_{t_1,\dots, t_n}$
 as its finite-dimensional distributions, i.e.\
\begin{align*}
\nu\circ \kappa_{t_1,\dots, t_k}^{-1}(B)
=\nu_{t_1,\dots, t_k}(B)
\qquad\text{for all }B\in\Borel(\R^k),\, t_1,\dots,t_k\in\N;
\end{align*}
see \cite[Th. 36.1]{Billingsley}. The uniqueness of the probability measure $\nu$ follows from the fact that the algebra generated by cylindrical sets is
a Dynkin system; see \cite[p.60]{Khoshnevisan}.
 \end{proof}

Similarly to the finite dimensional situation, Proposition \ref{pro.product} yields that every family $C=(C_k)_{k\in\N}$ of consistent copulas defines a mapping
\begin{align*}
\hat{C}\colon \M^{\otimes}(H)\to
 \M(\R^\infty),\qquad \mu\mapsto \hat{C}(\mu),
\end{align*}
where $\hat{C}(\mu)$ denotes the probability measure $\nu$ on $(\R^\infty,\Borel^\infty)$ satisfying \eqref{eq.mu-nu}. However, in contrast to the finite dimensional case the image $\hat{C}(\mu)$ is not necessarily a probability measure in the space $\ell^2$, and thus does not
result in a probability measure on the Hilbert space $H$. However, we obtain the following result:
\begin{theorem}\label{th.l2=1}
For a family $C=(C_k)_{k\in\N}$ of consistent copulas and $\mu\in \M^{\otimes}(H)$
 the following are equivalent:
\begin{enumerate}
\item[{\rm (1)}] $\hat{C}(\mu)$ is a  probability measure in $\M(\ell^2)$;
\item[{\rm (2)}] $\hat{C}(\mu)(\ell^2)=1$.
\end{enumerate}
\end{theorem}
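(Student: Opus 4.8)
The plan is to identify $\M(\ell^2)$ with the collection of probability measures on $(\R^\infty,\Borel^\infty)$ that are concentrated on $\ell^2$; once this identification is in place, both implications are essentially immediate. Throughout, write $\nu:=\hat C(\mu)\in\M(\R^\infty)$ and let $\iota\colon \ell^2\hookrightarrow\R^\infty$ denote the inclusion map.

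I would begin with two preliminary observations. First, $\ell^2$ is a measurable subset of $(\R^\infty,\Borel^\infty)$, since
\begin{align*}
\ell^2=\bigcup_{n\in\N}\bigcap_{m\in\N}\Big\{(x_k)_{k\in\N}\in\R^\infty:\ \textstyle\sum_{k=1}^m x_k^2\le n\Big\},
\end{align*}
and each set occurring in the intersection depends on finitely many coordinates only, hence lies in $\Borel^\infty$; in particular $\nu(\ell^2)$ is well defined, so that statement (2) is meaningful. Second --- and this is the only point requiring genuine care --- the trace $\sigma$-algebra $\{A\cap\ell^2:\ A\in\Borel^\infty\}$ coincides with the Borel $\sigma$-algebra $\Borel(\ell^2)$. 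Indeed, since $\ell^2$ is a separable Hilbert space its Borel $\sigma$-algebra is generated by the continuous linear functionals, hence by the coordinate maps $x\mapsto\scapro{x}{f_k}$; on the other hand the trace of $\Borel^\infty$ on $\ell^2$ is, by the very definition of $\Borel^\infty$, generated by these same maps. I would supply this short argument (or cite a standard reference on cylindrical versus Borel $\sigma$-algebras in separable metric spaces).

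With these facts the two implications follow. For (1)$\Rightarrow$(2): if $\nu$ is represented by some $\tilde\nu\in\M(\ell^2)$, i.e.\ $\nu=\tilde\nu\circ\iota^{-1}$, then $\nu(\ell^2)=\tilde\nu\big(\iota^{-1}(\ell^2)\big)=\tilde\nu(\ell^2)=1$. For (2)$\Rightarrow$(1): assuming $\nu(\ell^2)=1$, define $\tilde\nu$ on $\Borel(\ell^2)$ by $\tilde\nu(A\cap\ell^2):=\nu(A)$ for $A\in\Borel^\infty$. This is well defined, because $A_1\cap\ell^2=A_2\cap\ell^2$ forces $A_1\triangle A_2\subseteq\R^\infty\setminus\ell^2$, whence $\nu(A_1)=\nu(A_2)$; countable additivity is inherited from $\nu$ (after passing to disjoint $\Borel^\infty$-representatives), and $\tilde\nu(\ell^2)=\nu(\R^\infty)=1$, so $\tilde\nu\in\M(\ell^2)$ with $\tilde\nu\circ\iota^{-1}=\nu$. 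Thus $\nu$ is, under the above identification, the element $\tilde\nu$ of $\M(\ell^2)$, which is (1). I would close by remarking that, since $\ii\colon H\to\ell^2$ is an isometric isomorphism, $\tilde\nu\circ\ii$ then also defines a probability measure on $\Borel(H)$, and that measures on the Polish space $\ell^2$ are automatically Radon, so no further regularity needs to be checked.

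The main obstacle is the second preliminary observation: confirming that the $\sigma$-algebra inherited from the product structure of $\R^\infty$ is the full Borel $\sigma$-algebra of $\ell^2$, and not some strictly smaller cylindrical $\sigma$-algebra. Everything else is routine bookkeeping about the measurable inclusion $\ell^2\hookrightarrow\R^\infty$ and the transfer of a concentrated measure to its restriction.
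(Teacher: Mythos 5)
Your proof is correct and follows essentially the same route as the paper: both arguments reduce the theorem to the identification of Borel subsets of $\ell^2$ with $\Borel^\infty$-measurable sets, using the representation $\ell^2=\bigcup_n\bigcap_m\{\sum_{k=1}^m x_k^2\le n\}$ to see $\ell^2\in\Borel^\infty$ and the separability of $\ell^2$ (equivalently, that the coordinate functionals generate $\Borel(\ell^2)$, which the paper obtains from a separating-class argument via \cite[Pro.I.1.4]{Vaketal}) to match the trace of the product $\sigma$-algebra with $\Borel(\ell^2)$. The remaining bookkeeping about restricting a measure concentrated on $\ell^2$ is the same in both.
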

\begin{proof}
Due to Proposition \ref{pro.product}, we only have to show that $\Borel(\ell^2)\subseteq \Borel^\infty$.
Recall that the product $\sigma$-algebra $\Borel^\infty$ is generated by the system
of sets
\begin{align*}
{\mathcal C}:=
\Big\{ \big\{ (x_k)_{k\in\N}\in\R^\infty:\, x_j\in B\big\}, \, B\in\Borel(\R), j\in\N    \Big\},
\end{align*}
i.e.\ we have $\Borel^\infty=\sigma({\mathcal C})$.
Define the system of sets
\begin{align*}
{\mathcal D}:= {\mathcal C}\cap \ell^2=\Big\{ \big\{ (x_k)_{k\in\N}\in\ell^2:\, x_j\in B\big\}, \, B\in\Borel(\R), j\in\N    \Big\}.
\end{align*}
The set ${\mathcal D}$ is separating for $\ell^2$, as for $(a_k)_{k\in\N}$, $(b_k)_{k\in\N}\in \ell^2$ with $a\neq b$ choose
$j\in\N$ such that $a_j\neq b_j$. Then with
$Z:=\big\{ (x_k)_{k\in\N}\in\ell^2:\, x_j\in \{a_j\}\big\}$,
which is an element of $\mathcal D$,
 it follows that
\begin{align*}
1= \1_{Z}\big((a_k)_{k\in\N}\big)\neq \1_{Z}\big((b_k)_{k\in\N}\big)=0.
\end{align*}
It follows from \cite[Pro.I.1.4]{Vaketal} that
$\Borel(\ell^2)= \sigma({\mathcal D})$. The representation
\begin{align*}
 \ell^2= \bigcup_{n=1}^\infty \bigcap_{m=1}^\infty \Big\{ (x_k)_{k\in\N}\in\R^\infty:\sum_{k=1}^m x_k^2 \le n\Big\}
\end{align*}
yields that $\ell^2\in \Borel^\infty$. Thus, we have
${\mathcal D}\subseteq\Borel^\infty$ which completes the proof.
\end{proof}

\begin{corollary}
Let $(C_k)_{k\in\N}$ be a family of consistent copulas.
If $\mu\in \M^{\otimes}\cap \M^2(H)$  then there exists a
 probability measure $\nu$ in $\M^2(H)$ satisfying
 for each $I_1,\dots, I_k\in \I$ and $k\in\N$:
 \begin{align*}
\nu\circ \pi_{e_1,\ldots ,e_k} ^{-1}\Big(I_1\times \,\cdots\,\times  I_k\Big)=C_k\Big(\mu_1(I_1),\ldots, \mu_k(I_k)\Big),
\end{align*}
where $\mu_j:=\mu\circ \pi_{e_j}^{-1}$.
\end{corollary}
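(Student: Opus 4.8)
The strategy is to combine Proposition~\ref{pro.product} with Theorem~\ref{th.l2=1}, so the only real content is to verify that the hypothesis $\mu\in\M^2(H)$ forces the measure produced by Proposition~\ref{pro.product} to concentrate on $\ell^2$. First I would invoke Proposition~\ref{pro.product} to obtain the unique probability measure $\nu_0$ on $(\R^\infty,\Borel^\infty)$ satisfying \eqref{eq.mu-nu}. The key observation is that each one-dimensional marginal of $\nu_0$ agrees with the corresponding marginal of $\mu$: taking $I_2,\dots,I_k=\R$ in \eqref{eq.mu-nu} and using the copula property $C_k(u_1,1,\dots,1)=u_1$ gives $\nu_0\circ\kappa_j^{-1}=\mu\circ\pi_{e_j}^{-1}=\mu_j$ for every $j\in\N$. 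Hence for the coordinate process $(\kappa_k)_{k\in\N}$ on $(\R^\infty,\Borel^\infty,\nu_0)$ one has, by monotone convergence,
\begin{align*}
\int_{\R^\infty}\sum_{k=1}^\infty \kappa_k^2\,d\nu_0
=\sum_{k=1}^\infty \int_\R x^2\,\mu_k(dx)
=\sum_{k=1}^\infty \int_H \scapro{u}{e_k}^2\,\mu(du)
=\int_H \norm{u}^2\,\mu(du)<\infty,
\end{align*}
where the interchange of sum and integral in the middle uses Tonelli, and the last equality is Parseval together with $\mu\in\M^2(H)$. In particular $\sum_{k=1}^\infty\kappa_k^2<\infty$ $\nu_0$-almost surely, i.e. $\nu_0(\ell^2)=1$.

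Next I would apply Theorem~\ref{th.l2=1}: since $\hat C(\mu)=\nu_0$ satisfies $\hat C(\mu)(\ell^2)=1$, that theorem (which establishes $\Borel(\ell^2)\subseteq\Borel^\infty$) gives that $\nu_0$ restricts to a genuine probability measure $\tilde\nu\in\M(\ell^2)$. Transporting back through the isometric isomorphism $\ii\colon H\to\ell^2$ of \eqref{eq.H-and-l2}, set $\nu:=\tilde\nu\circ\ii$, a probability measure on $\Borel(H)$; the finiteness of $\int\norm{\cdot}^2\,d\nu=\int\norm{\cdot}_{\ell^2}^2\,d\tilde\nu=\int\sum_k\kappa_k^2\,d\nu_0<\infty$ computed above shows $\nu\in\M^2(H)$. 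It remains to check the copula identity. For $h\in H$ one has $\scapro{h}{e_j}=\langle\ii(h),f_j\rangle=\kappa_j(\ii(h))$, so $\pi_{e_1,\dots,e_k}=\kappa_{1,\dots,k}\circ\ii$ as maps on $H$; consequently, for $I_1,\dots,I_k\in\I$,
\begin{align*}
\nu\circ\pi_{e_1,\dots,e_k}^{-1}\big(I_1\times\cdots\times I_k\big)
=\tilde\nu\circ\kappa_{1,\dots,k}^{-1}\big(I_1\times\cdots\times I_k\big)
=\nu_0\circ\kappa_{1,\dots,k}^{-1}\big(I_1\times\cdots\times I_k\big)
=C_k\big(\mu_1(I_1),\dots,\mu_k(I_k)\big),
\end{align*}
using \eqref{eq.mu-nu}, which is exactly the asserted relation.

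\textbf{Main obstacle.} None of the steps is deep, but the point that needs care is the measurability bookkeeping: one must know that $\ell^2\in\Borel^\infty$ and that $\Borel(\ell^2)\subseteq\Borel^\infty$ so that the restriction of $\nu_0$ to $\ell^2$ and the push-forward along $\ii$ are well defined — but both of these facts are exactly what Theorem~\ref{th.l2=1} and its proof supply, so the corollary is essentially a packaging of the preceding two results together with the elementary $L^2$-mass computation via Parseval. A minor subtlety worth a line is justifying $\nu_0\circ\kappa_j^{-1}=\mu_j$ on all of $\Borel(\R)$ rather than merely on $\I$: the intervals $I(x)=(-\infty,x]$ generate $\Borel(\R)$ and form a $\pi$-system, so the two measures, agreeing there, agree everywhere, which legitimises the Tonelli computation above.
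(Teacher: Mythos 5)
Your proposal is correct and follows essentially the same route as the paper: identify the one-dimensional marginals of the measure from Proposition~\ref{pro.product} with the $\mu_j$ via the copula marginal property, use Tonelli and Parseval to show the $\ell^2$-norm is square-integrable (hence finite a.s.), invoke Theorem~\ref{th.l2=1}, and push forward through the isomorphism $\ii$. Your added remarks on the $\pi$-system extension of the marginal identity and on the measurability bookkeeping are details the paper leaves implicit, but they change nothing of substance.
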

\begin{proof}
Let  $\hat{\nu}:=\hat{C}(\mu)$ denote the probability measure on $(\R^\infty,\Borel^\infty)$ satisfying \eqref{eq.mu-nu} according to Proposition \ref{pro.product}, and let $(\kappa_k:\, k\in\N)$ be the canonical coordinate process on $(\R^\infty,\Borel^\infty)$. By denoting $\hat{\nu}_j:=\hat{\nu}\circ \kappa_j^{-1}$ it follows from \eqref{eq.mu-nu}
for each $I\in\I$  that
\begin{align*}
\hat{\nu}_j(I)
=\hat{\nu}\circ\kappa_{1,\dots, j}^{-1}(\R\times\,\cdots\,\times \R\times I)
= C_k(1,\dots, 1, \mu_j(I))
=\mu_j(I),
\end{align*}
which yields $\hat{\nu}_j=\mu_j$. Thus, we obtain
\begin{align*}
E_{\hat{\nu}}\big[\norm{\kappa}_{\ell^2}^2\big]
= \sum_{k=1}^\infty E_{\hat{\nu}}\big[\kappa_k^2\big]
&=\sum_{k=1}^\infty \int_{\R} x^2\, \hat{\nu}_k(dx)\\
&=\sum_{k=1}^\infty \int_{\R} x^2\, \mu_k(dx)\\
&=\sum_{k=1}^\infty \int_{H} \scapro{h}{e_k}^2 \mu(dh)
=\int_H \norm{h}^2\,\mu(dh)<\infty.
\end{align*}
It follows that $\norm{\kappa}_{\ell^2}<\infty$ $\hat{\nu}$-a.s., which yields $\hat{\nu}(\ell^2)=1$.
Theorem \ref{th.l2=1} implies that $\hat{\nu}$ is a probability measure in $\M(\ell^2)$.

Denote the inverse of the isomorphism $\ii\colon H\to \ell^2$ defined in
\eqref{eq.H-and-l2} by
$\jj$ and define a probability measure  $\nu:=\hat{\nu}\circ \jj^{-1}$.
Then we have $\nu\in \M(H)$ and
\begin{align*}
\int_H \norm{h}^2\,\nu(dh)
=\sum_{k=1}^\infty\int_{\R} x^2  (\nu\circ \pi_{e_k}^{-1})(dx)
\sum_{k=1}^\infty \int_{\R} x^2\, \hat{\nu}_k(dx)
=\int_H \norm{h}^2\,\mu(dh)<\infty.
\end{align*}
Equality \eqref{eq.finite-Sklar-marginals}
implies for every $I_1,\dots, I_k\in\I$ and $\in\N$ that
\begin{align*}
&\nu\circ\pi_{e_1,\dots, e_k}^{-1}\big(I_1\times\, \cdots \,\times I_k\big)\\
&\qquad= \hat{\nu}\Big(\Big\{(x_k)_{k\in\N}\in \ell^2:\, \pi_{e_1,\dots, e_k}\big(j((x_k)_{k\in\N})\big)
\in I_1\times\, \cdots \,\times I_k\Big\}\Big)\\
&\qquad =\hat{\nu}\Big(\Big\{(x_k)_{k\in\N}\in \ell^2:\,
 (x_1,\dots, x_k)\in I_1\times\, \cdots \,\times I_k\Big\}\Big)\\
&\qquad=\hat{\nu}\circ \kappa_{1,\dots, k}^{-1}\Big( I_1\times\, \cdots \,\times I_k\Big)\\
&\qquad= C_k\big(\mu_1(I_1),\dots, \mu_k(I_k)\big),
\end{align*}
which completes the proof.
\end{proof}

\section{Copulas with densities}\label{se.densities}

Most of the examples of copulas in finite dimensional spaces have a density.
For a family of consistent copulas, the following property guarantees
similar results in the infinite dimensional setting as in Euclidean spaces.
\begin{definition}
Let $(C_k)_{k\in\N}$ be a family  of consistent copulas $C_k$ with densities $c_k\colon [0,1]^k\to \Rp$. The family $(c_k)_{k\in\N}$ of densities is called {\em uniformly integrable} if
\begin{align*}
\lim_{r\to\infty}\sup_{k\in\N}
 \int_{\{u\in [0,1]^k:\, c_k(u)\ge r\}}
  c_k(u)\,du=0.
\end{align*}
\end{definition}
We need the following definition from asymptotic statistics.
\begin{definition}
Let $(S_k,\S_k)$ be a measurable space with two probability measures $P_k$ and $Q_k$
for every $k\in\N$. Then the sequence $(Q_k:\,k\in\N)$ is {\em contigiuous with respect
to } $(P_k:\,k\in\N)$ if  for every sequence $(A_k)_{k\in\N}$ of
sets $A_k\in\S_k$ we have:
\begin{align*}
\lim_{k\to\infty} P_k(A_k)=0
\;\Rightarrow \; \lim_{k\to\infty} Q_k(A_k)=0.
\end{align*}
This is denoted by $(Q_k:\,k\in\N)
\lhd (P_k:\,k\in\N)$.
\end{definition}

\begin{theorem}\label{th.uni-form}
 Let $(C_k)_{k\in\N}$ be a  family of consistent copulas $C_k$ with  densities $c_k$.
Then for every $\mu\in \M^{\otimes}\cap \M^c(H)$ the following are
equivalent:
\begin{enumerate}
\item[{\rm (1)}] $(c_k)_{k\in\N}$ is uniformly integrable.
\item[{\rm (2)}] there exists a measure $\nu\in \M(H)$ satisfying:
 \begin{enumerate}
 \item[{\rm (a)}] $(\nu\circ \pi_{e_1,\dots, e_k}^{-1}:k\in\N) \lhd
     (\mu\circ \pi_{e_1,\dots, e_k}^{-1}:k\in\N)$.
\item[{\rm (b)}]
     for each $I_1,\dots, I_k\in \I$ and $k\in\N$ we have:
\begin{align}\label{eq.th-eq}
\nu\circ \pi_{e_1,\ldots ,e_k} ^{-1}\Big(I_1\times \,\cdots\,\times  I_k\Big)=C_k\Big(\mu_1\big(I_1)\big),\ldots, \mu_k\big(I_k\big)\Big),
\end{align}
where $\mu_j:=\mu\circ \pi_{e_j}^{-1}$.
 \end{enumerate}
\end{enumerate}
In this situation the measure $\nu$ is absolutely continuous with respect to $\mu$.
\end{theorem}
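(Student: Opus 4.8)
The plan is to transport everything to the product space $(\R^\infty,\Borel^\infty)$ by means of Proposition~\ref{pro.product}, to compute the Radon--Nikodym densities of the finite-dimensional marginals of $\hat{C}(\mu)$, and then to recognise uniform integrability of $(c_k)_{k\in\N}$ as a Kakutani-type criterion for absolute continuity on $\Borel^\infty$. Write $\mu_j:=\mu\circ\pi_{e_j}^{-1}$ with (continuous) cdf $F_j$, and $P_k:=\mu\circ\pi_{e_1,\dots,e_k}^{-1}=\bigotimes_{j=1}^k\mu_j$, the last equality because $\mu\in\M^{\otimes}(H)$. Since $\mu\in\M^c(H)$, the probability integral transform shows that $T_k:=(F_1,\dots,F_k)$ pushes $P_k$ forward to Lebesgue measure on $[0,1]^k$. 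The first step is then to verify, by checking equality on the generating $\pi$-system of lower quadrants $\{I_1\times\cdots\times I_k\}$ and using that $\{y_j\le x_j\}$ and $\{F_j(y_j)\le F_j(x_j)\}$ differ only by a $\mu_j$-null set, that $\hat{C}(\mu)\circ\kappa_{1,\dots,k}^{-1}$ has density
\[
g_k(y_1,\dots,y_k):=c_k\big(F_1(y_1),\dots,F_k(y_k)\big)
\]
with respect to $P_k$. The change of variables $u=T_k(y)$ yields $\int_{\{g_k\ge r\}}g_k\,dP_k=\int_{\{c_k\ge r\}}c_k(u)\,du$ for every $r>0$, so that $(c_k)_{k\in\N}$ is uniformly integrable if and only if the sequence of marginal densities $(g_k)_{k\in\N}$ is uniformly integrable with respect to $(P_k)_{k\in\N}$.

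For $(1)\Rightarrow(2)$, put $\mu^\infty:=\mu\circ\ii^{-1}$, extended to $\Borel^\infty$, so that $\mu^\infty(\ell^2)=1$ and $\mu^\infty=\bigotimes_{k}\mu_k$, and let $\mathcal F_k:=\sigma(\kappa_1,\dots,\kappa_k)$. By the first step, $M_k:=g_k(\kappa_1,\dots,\kappa_k)$ is the density of $\hat{C}(\mu)|_{\mathcal F_k}$ with respect to $\mu^\infty|_{\mathcal F_k}$, hence a nonnegative $(\mathcal F_k)$-martingale under $\mu^\infty$ (its martingale property being the copula consistency identity $\int_0^1 c_{k+1}(\cdot,u)\,du=c_k$ read through the integral transform). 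If $(c_k)$, equivalently $(M_k)$, is uniformly integrable, then by the martingale convergence theorem and Vitali's theorem $M_k\to M_\infty$ in $L^1(\mu^\infty)$; since the finite measures $\hat{C}(\mu)$ and $M_\infty\,\mu^\infty$ then agree on the $\pi$-system $\bigcup_k\mathcal F_k$ generating $\Borel^\infty$, they coincide and $\hat{C}(\mu)\ll\mu^\infty$. In particular $\hat{C}(\mu)(\ell^2)=1$, so Theorem~\ref{th.l2=1} gives $\hat{C}(\mu)\in\M(\ell^2)$ and $\nu:=\hat{C}(\mu)\circ\jj^{-1}\in\M(H)$, which satisfies \eqref{eq.th-eq} by construction, is absolutely continuous with respect to $\mu$ because $\hat{C}(\mu)\ll\mu^\infty$ and $\mu=\mu^\infty\circ\jj^{-1}$, and obeys the contiguity (a) by the elementary estimate $\nu\circ\pi_{e_1,\dots,e_k}^{-1}(A_k)\le\int_{\{g_k\ge r\}}g_k\,dP_k+r\,P_k(A_k)$ together with uniform integrability of $(g_k)$.

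For $(2)\Rightarrow(1)$, equality (b) on the $\pi$-system of lower quadrants identifies $\nu\circ\pi_{e_1,\dots,e_k}^{-1}$ as the probability measure with density $g_k$ with respect to $P_k$. If $(g_k)$ were not uniformly integrable with respect to $(P_k)$, one finds $\e>0$ and strictly increasing indices $(k_m)_{m\in\N}$ with $\int_{\{g_{k_m}\ge m\}}g_{k_m}\,dP_{k_m}\ge\e$; setting $A_{k_m}:=\{g_{k_m}\ge m\}$ and $A_k:=\emptyset$ for $k\notin\{k_m:m\in\N\}$, Markov's inequality gives $P_k(A_k)\to0$, whereas $\nu\circ\pi_{e_1,\dots,e_k}^{-1}(A_k)\ge\e$ for infinitely many $k$, contradicting (a). Hence $(g_k)$, and by the first step $(c_k)$, is uniformly integrable.

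The main obstacle is the direction $(1)\Rightarrow(2)$: absolute continuity of all finite-dimensional marginals does not force absolute continuity of the limiting measure on $\Borel^\infty$ (the mutual singularity of suitable infinite Gaussian products being the standard counterexample), so one genuinely needs the likelihood-ratio martingale together with Vitali's theorem to see that uniform integrability of the copula densities is exactly the Kakutani-type condition making $\hat{C}(\mu)$ concentrated on $\ell^2$ and absolutely continuous with respect to $\mu$. A secondary, recurring point is the careful use of the probability integral transform to pass between the copula densities $c_k$ on $[0,1]^k$ and the Radon--Nikodym derivatives $g_k$ on $\R^k$, which is precisely where the assumption $\mu\in\M^c(H)$ enters.
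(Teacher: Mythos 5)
Your proof is correct, but it follows a genuinely different route from the paper's. The paper stays on $H$: it defines for each $k$ a measure $\nu_k(B)=\int_B c_k(g_k(h))\,\mu(dh)$ on $\Borel(H)$, uses uniform integrability together with the Radon property of $\mu$ to show $(\nu_k)_{k\in\N}$ is tight, invokes Prokhorov's theorem to extract the limit $\nu\in\M(H)$ as a weak limit, and proves the contiguity statement (a) in \emph{both} directions by identifying the weak limit of the likelihood ratios $r_k=c_k\circ F^k$ with $\mu\circ M^{-1}$ and appealing to Le Cam's first lemma (Theorem 3.2.10 in van der Vaart--Wellner) plus a uniform-integrability criterion from Kallenberg. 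You instead transport everything to $(\R^\infty,\Borel^\infty)$, identify $\hat{C}(\mu)\restriction_{\F_k}=M_k\,\mu^\infty\restriction_{\F_k}$, and use $L^1$-martingale convergence to conclude $\hat{C}(\mu)=M_\infty\,\mu^\infty$ on all of $\Borel^\infty$; Theorem~\ref{th.l2=1} then lands you back in $\M(H)$, absolute continuity of $\nu$ with respect to $\mu$ comes for free, and contiguity is handled by the elementary truncation bound $\nu\circ\pi^{-1}_{e_1,\dots,e_k}(A_k)\le\int_{\{g_k\ge r\}}g_k\,dP_k+rP_k(A_k)$ and, for the converse, an explicit bad sequence of sets. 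Your route is more self-contained (no Prokhorov compactness, no Le Cam lemma) and reuses the infrastructure of Theorem~\ref{th.l2=1}; the paper's route constructs $\nu$ as a concrete weak limit of measures on $H$ and outsources the contiguity bookkeeping to a standard statistical lemma. One small repair in your $(2)\Rightarrow(1)$ step: failure of uniform integrability gives, for every $r$, some index $k(r)$ with $\int_{\{g_{k(r)}\ge r\}}g_{k(r)}\,dP_{k(r)}\ge\delta$; since each individual $g_k$ is $P_k$-integrable, these indices must escape to infinity as $r\to\infty$, and the truncation levels along the chosen subsequence should be taken as $r_m\to\infty$ rather than literally $m$ tied to $k_m$ — as written, $\{g_{k_m}\ge m\}$ presupposes this, but the fix is routine.
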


\begin{proof}
We begin with deriving a few formulas which are true without assuming (1) or (2).  Denote by $F_j$ the cumulative distribution function of $\mu_j$ and  define the function
\begin{align*}
 g_k\colon H\to [0,1]^k,\qquad  g_k(h)=\big(F_1(\scapro{h}{e_1}),\dots, F_k(\scapro{h}{e_k})\big).
\end{align*}
The probability integral transform\footnote{I think that here we need continuous cdf, i.e.\ $\mu\in \M^c$} guarantees for every $B_1,\dots, B_k\in\Borel(\R)$ that
\begin{align*}
 \mu\circ g_k^{-1}(B_1\times\, \cdots \,\times B_k)
 &=\mu\big(h\in H :
  F_1(\scapro{h}{e_1})\in B_1,\dots, F_k(\scapro{h}{e_k})\in B_k\big)\notag\\
 &=\prod_{j=1}^k \mu_j(F_j^{-1}(B_j))\notag\\
&= \lambda_k\big((B_1\times\cdots\times B_k)\cap [0,1]^k\big),
\end{align*}
where $\lambda_k$ denotes the Lebesgue measure on $\Borel(\R^k)$. Consequently, we can conclude:
\begin{align}\label{eq.mu-and-lambda}
 \mu\circ g_k^{-1}=\lambda_k|_{[0,1]^k}
 \qquad\text{for all }k\in\N.
\end{align}
For each $k\in\N$ define a  measure on $\Borel(H)$ by
\begin{align*}
\nu_{k}\colon \Borel(H)\to [0,1],\qquad
\nu_{k}(B) =\int_{B} c_k\big(g_k(h))\big)\,\mu(dh).
\end{align*}
It is a probability measure since \eqref{eq.mu-and-lambda} guarantees
\begin{align*}
\nu_k(H)=\int_H c_k(g_k(h))\,\mu(dh)
=\int_{[0,1]^k} c_k(x)\, dx=1.
\end{align*}
We show that each measure $\nu_k$ satisfies \eqref{eq.th-eq}. For this purpose, define for every $j\in\N$ the Borel set $A_j\in\Borel(\R)$  where the cdf $F_j$ is constant:
\begin{align*}
 A_j:=\bigcup_{q\in\Q} \big\{x\in \R:\, F_j(x)
 =F_j(x+q)\big\}.
\end{align*}
It follows that
\begin{align*}
\mu_j\big(A_j)\le
   \sum_{q\in\Q}\mu_j\big(\big\{x\in \R:\, F_j(x)
    =F_j(x+q)\big\}\big)
    =0.
\end{align*}
Define for every $k\in\N$ and $j\in\{1,\dots, k\}$ the Borel set
\begin{align*}
B_j^k:=\pi_{e_1,\dots, e_k}^{-1}(\R\times\cdots\times\R\times A_j\times\R\times\cdots \times\R)
  \in \Borel(H).
\end{align*}
It follows that $B_j^k$ is a $\mu$-nullset as
\begin{align*}
\mu(B_j^k)&=\mu\circ \pi_{e_1,\dots,e_k}^{-1}(\R\times\cdots\times\R\times A_j\times\R\times\cdots \times\R)\\
&=\mu_1(\R)\cdot\ldots\cdot \mu_{j-1}(\R)
\mu_j(A_j)\mu_{j+1}(\R)\cdot\ldots\cdot\mu_k(\R)
=0.
\end{align*}
Consequently, the set $B^k:=B_1^k\cup\dots \cup B_k^k$ is also a $\mu$-nullset. For each $u_1,\dots, u_k\in [0,1]$ define $I(u_j)=(-\infty,u_j]$.  As every cdf $F_j$ for $j=1,\dots, k$ is strictly increasing on $(B^k)^c$ we conclude
\begin{align*}
&\big\{h\in (B^k)^c:\,
\scapro{h}{e_1}\le u_1,\dots, \scapro{h}{e_k}\le u_k\big\}\\
&\qquad \qquad= \big\{h\in (B^k)^c:
F_1(\scapro{h}{e_1})\le F(u_1),\dots,F_k(\scapro{h}{e_k})\le F_k(u_k)\big\}.
\end{align*}
Together with \eqref{eq.mu-and-lambda}, it follows  that
\begin{align}\label{eq.nu-and-C}
 &\big(\nu_k\circ \pi^{-1}_{e_1,\dots, e_k}\big)\big(I(u_1)\times \ldots \times I(u_k)\big)\notag\\
 &\qquad
=\nu_k \big(\big\{h\in H: \pi_{e_1,\dots, e_k}(h)
 \in I(u_1)\times \cdots \times I(u_k)\big\}\big)\notag\\
 &\qquad=\int 1_{I(u_1)\times \cdots \times I(u_k)}\big(\pi_{e_1,\dots,e_k}(h)\big) c_k(g_k(h))\,\mu(dh)\notag\\
 &\qquad=\int_{(B^k)^c} 1_{I(u_1)\times \cdots \times I(u_k)}\big(\pi_{e_1,\dots,e_k}(h)\big) c_k(g_k(h))\,\mu(dh)\notag\\
  &\qquad=\int_{(B^k)^c} 1_{I(F_1(u_1))\times\dots\times I(F_k(u_k))}\big(g_k(h)\big) c_k(g_k(h))\,\mu(dh)\notag\\
  &\qquad=\int 1_{I(F_1(u_1))\times\cdots\times I(F_k(u_k))}\big(g_k(h)\big) c_k(g_k(h))\,\mu(dh)\notag\\
    &\qquad=\int 1_{I(F_1(u_1))\times\cdots\times I(F_k(u_k))}\big((s_1,\dots,s_k)\big) c_k(s_1,\dots,s_k))\,(\mu\circ g_k^{-1})(ds_1,\dots, ds_k)\notag\\
&\qquad = C_k\big(F_1(u_1),\dots, F_k(u_k)\big).
\end{align}
In particular, relation \eqref{eq.nu-and-C} and the consistency of the copulas imply for every $\ell\ge k $ that
\begin{align*}
 &\big(\nu_\ell\circ \pi^{-1}_{e_1,\dots, e_k}\big)\big(I(u_1)\times \cdots \times I(u_k)\big)\\
 &\qquad
  =\big(\nu_\ell\circ \pi^{-1}_{e_1,\dots, e_\ell}\big)\big(I(u_1)\times \cdots \times I(u_k)
  \times\R\times\cdots \times \R\big)\\
 &\qquad
  = C_\ell\big(F_1(u_1),\dots, F_k(u_k),1,\dots, 1\big) \\
 &\qquad
  =C_k \big(F_1(u_1),\dots, F_k(u_k)\big)\notag\\
  &\qquad
  =\big(\nu_k\circ \pi^{-1}_{e_1,\dots, e_k}\big)\big(I(u_1)\times \cdots \times I(u_k)\big).
\end{align*}
Thus, we can conclude that
\begin{align}\label{eq.consistency-projection}
\nu_{\ell}\circ \pi_{e_1,\dots,e_k}^{-1}=\nu_k\circ\pi_{e_1,\dots,e_k}^{-1}
\qquad\text{for all } \ell\ge k.
\end{align}
Now we can establish that on the probability space $(H,\Borel(H),\mu)$ the random variables
\begin{align}\label{eq.def-martingales}
M_k\colon H\to\R,\qquad
M_k(h):=c_k(g_k(h))
\end{align}
form a martingale $(M_k:\,k\in\N)$ with respect to the filtration $(\F_k)_{k\in\N}$ defined by
\begin{align*}
\F_k:=\pi_{e_1,\dots, e_k}^{-1}\big(\Borel(\R^k)\big).
\end{align*}
Clearly, $M_k$ is $\F_k$-measurable and its definition yields for every $k\in\N$ that
\begin{align}\label{eq.M-and-nu}
E_\mu\big[\1_B M_k\big]= \nu_k(B)
\qquad\text{for every }B\in\Borel(H),
\end{align}
which shows $E_\mu[M_k]=1$.
For arbitrary $B\in \F_{k}$ there exists $A\in \Borel(\R^k)$ such that $B=\pi_{e_1,\dots, e_k}^{-1}(A)$. The equalities \eqref{eq.consistency-projection} and \eqref{eq.M-and-nu} imply
\begin{align*}
E_\mu\big[\1_B M_{k+1}\big]
= \nu_{k+1}\circ \pi_{e_1,\dots, e_k}^{-1}(A)
= \nu_{k}\circ \pi_{e_1,\dots, e_k}^{-1}(A)
= E_\mu\big[\1_B M_k\big],
\end{align*}
which establishes that $(M_k:\, k\in\N)$ is a martingale. As $(M_k:\, k\in\N)$ is a non-negative martingale, there exists a random variable $M$ on $(H,\Borel(H),\mu)$ with $E_\mu[\abs{M}]<\infty$ such that $M_k\to M$ $\mu$-a.s.;
see \cite[Th. 35.4]{Billingsley}.

Define the probability measures $p_k:=\mu\circ \pi_{e_1,\dots ,e_k}^{-1}$ and $q_k:=\nu_k\circ \pi_{e_1,\dots, e_k}^{-1}$ for every $k\in\N$. By defining  the function
\begin{align*}
 F^k\colon \R^k \to [0,1]^k,
 \qquad F^k (x_1,\dots, x_k)=
 \big(F_1(x_1),\dots, F_k(x_k)\big),
\end{align*}
we obtain for every $A\in\Borel(\R^k)$ that
\begin{align*}
q_k(A)&=\int_H \1_{\pi_{e_1,\dots,e_k}^{-1}(A)}(h) c_k(g_k(h))\,\mu(dh)\\
&=\int_H \1_{A}(\pi_{e_1,\dots,e_k}(h)) c_k(F^k(\pi_{e_1,\dots,e_k}(h)))\,\mu(dh)\\
& =\int_{\R^k} \1_{A}(x)c_k(F^k(x))\,p_k(dx).
\end{align*}
Thus, the Radon-Nikodym density of $q_k$ with respect to $p_k$ is given by
\begin{align}\label{eq.Radon-Nikodym}
r_k:=\frac{dq_k}{dp_k}=c_k\circ F^k.
\end{align}
The Radon-Nikodym density $r_k$ satisfies for every continuous and bounded function
$f\colon \R\to\R$ that
\begin{align}\label{eq.R-N-and-M}
E_{p_{k}}\big[ f(r_{k})\big]
=\int_{\R^{k}} f\big(c_{k}\big(F^{k}(x)\big)\big)\,p_{k}(dx)
=E_\mu\big[f(M_{k})\big].
\end{align}

(1)$\Rightarrow$ (2):
For given $\epsilon>0$ it follows from \eqref{eq.mu-and-lambda} and the uniform integrability of the densities that there exists $r>0$ such that we have for all $k\in\N$:
\begin{align}\label{eq.uniform-int}
\int_{\{h\in H: c_k(g_k(h))>r\}} c_k(g_k(h))\,\mu(dh)
&= \int_{\{x\in\R^k c_k(x)>r\}} c_k(x)\,dx
\le \epsilon.
\end{align}
Since $\mu$ is a Radon measure there exists a compact set $K\subseteq H$ such that $ \mu(K^c)\le \frac{\epsilon}{r}$.
Combining the two estimates  implies for all $k\in\N$ that
\begin{align*}
\nu_k(K^c)&=\int_{\{h\in K^c: c_k(g_k(h))>r\}} c_k(g_k(h))\,\mu(dh)
 + \int_{\{h\in K^c: c_k(g_k(h))\le r\}} c_k(g_k(h))\,\mu(dh)\\
& \le \epsilon + r \frac{\epsilon}{r}=2\epsilon.
\end{align*}
Prokhorov's Theorem implies that the sequence $(\nu_k)_{k\in\N}$ is relatively compact in $\M(H)$.

In the following we establish that  $(\nu_k)_{k\in\N}$ converges weakly to a measure  $\nu\in\M(H)$. For this purpose, let $(\nu_{k_n})_{n\in\N}$ be a sub-sequence converging weakly to a measure $\nu\in \M(H)$. It follows for fixed $\ell\in \N$ that $\nu_{k_n}\circ \pi_{e_1,\dots, e_\ell}^{-1}$ converges weakly to $\nu\circ \pi_{e_1,\dots, e_\ell}^{-1}$ in $\M(\R^\ell)$ for $n\to\infty$. On the other hand,
relation \eqref{eq.consistency-projection} implies for every $n\in\N$ with $k_n\ge \ell$ that
\begin{align*}
\nu_{k_n}\circ \pi_{e_1,\dots,e_\ell}^{-1}=\nu_\ell\circ\pi_{e_1,\dots,e_\ell}^{-1}.
\end{align*}
Consequently, it follows for every $\ell\in\N$ that
\begin{align}\label{eq.nu-eventually-const}
 \nu\circ \pi_{e_1,\dots, e_\ell}^{-1}
 =\nu_\ell\circ\pi_{e_1,\dots,e_\ell}^{-1}.
\end{align}
Relation \eqref{eq.nu-and-C} implies for every $I_1,\dots, I_\ell\in \I$ and $\ell\in\N$ that
 \begin{align*}
\nu\circ \pi_{e_1,\dots, e_\ell}^{-1}\Big(I_1\times \,\cdots\,\times  I_\ell\Big)
&=\nu_\ell\circ \pi_{e_1,\dots, e_\ell}^{-1}\Big(I_1\times \,\cdots\,\times  I_\ell\Big)\\
&=C_\ell\Big(\mu_1\big(I_1)\big),\ldots, \mu_\ell\big(I_\ell)\big)\Big).
\end{align*}
The uniqueness statement in Proposition \ref{pro.product} yields that $\nu\circ \ii^{-1}=\hat{C}(\mu)$. Thus, each weakly convergent subsequence of $(\nu_k)_{k\in\N}$ converges to the same measure $\nu\in \M(H)$ which satisfies \eqref{eq.th-eq}, and thus the proof of (b) is completed.

For establishing (a), define for each $k\in\N$ the probability measure $Q_k:=\nu\circ \pi_{e_1,\dots, e_k}^{-1}$. Equality \eqref{eq.nu-eventually-const} implies   that the Radon-Nikodym density of $Q_k$ with respect to $p_k$ is given by $r_k$ defined in \eqref{eq.Radon-Nikodym}. Assume that a subsequence $(r_{k_n})_{n\in\N}$ converges weakly to a Borel measure $r$, that is for each  continuous and bounded function $f\colon \R\to \R$ it follows
\begin{align*}
\lim_{n\to\infty }E_{p_{k_n}}\big[f(r_{k_n})\big]
=\int f(x)\,r(dx).
\end{align*}
Since equality \eqref{eq.R-N-and-M} and the weak convergence of the martingale $(M_k:\,k\in\N)$ imply
\begin{align}\label{eq.r-and-M}
E_{p_{k_n}}\big[ f(r_{k_n})\big]
=E_\mu\big[f(M_{k_n})\big]&\to\int_H f\big(M(h)\big)\,\mu(dh),
\end{align}
we conclude $r=\mu\circ M^{-1}$. Since Assumption (1) guarantees that  $(M_k:\, k\in\N)$ is a uniform integrable martingale by \eqref{eq.uniform-int} it follows that $M_k\to M$ in mean and thus
\begin{align*}
 1=E_\mu[M]=\int_{\R} x\,\big(\mu\circ M^{-1}\big)(dx).
\end{align*}
Theorem 3.2.10 in \cite{VaartWellner} implies (a).

(2)$\Rightarrow$(1). Define for each $k\in\N$ the probability measure $Q_k:=\nu\circ \pi_{e_1,\dots, e_k}^{-1}$. It follows from \eqref{eq.th-eq} and \eqref{eq.nu-and-C} that the Radon-Nikodym density of $Q_k$ with respect to $p_k$ is given by $r_k$ defined in \eqref{eq.Radon-Nikodym}. Assume that $(r_k)_{k\in\N}$ converges weakly to a measure $r$. As in \eqref{eq.r-and-M} it follows that $r=\mu\circ M^{-1}$.  Theorem 3.2.10 in \cite{VaartWellner} implies
\begin{align*}
1=\int_{\R} x\,r(dx)=\int_{\R} x\,\big(\mu\circ M^{-1}\big)(dx)
=\int_H M(h)\,\mu(dh)=E_\mu[M].
\end{align*}
It follows from \cite[Le.3.11]{Kallenberg}  that $(M_k:\,k\in\N)$ is uniformly integrable which yields (1).

It remains to show that $\nu$ is absolutely continuous with respect to $\mu$. For every $B\in \F_k$ for some $k\in\N$ there exists $A\in \Borel(\R^k)$ with $B=\pi_{e_1,\dots, e_k}^{-1}(A)$. It follows from \eqref{eq.M-and-nu}  and \eqref{eq.nu-eventually-const} that
\begin{align}\label{eq.nu-and-M}
\nu(B)=\nu\circ\pi_{e_1,\dots, e_k}^{-1}(A)
=\nu_k\circ\pi_{e_1,\dots, e_k}^{-1}(A)
= E_\mu\big[\1_B M_k\big].
\end{align}
As every set $B\in\Borel(H)$ obeys
\begin{align*}
B=\bigcap_{k=1}^\infty \pi_{e_1,\dots, e_k}^{-1}\big(\pi_{e_1,\dots, e_k}(B)\big),
\end{align*}
we obtain by defining $B_n:=\bigcap_{k=1}^n \pi_{e_1,\dots, e_k}^{-1}\big(\pi_{e_1,\dots, e_k}(B)\big)$ that $B_n\in\F_n$ and
\begin{align}\label{eq.nu-limit}
 \nu(B)=\lim_{n\to\infty} \nu(B_n).
\end{align}
On the other hand, since $\mu(B\setminus B_n)\to 0$ the uniform integrability of $(M_n:\,n\in\N)$ and the convergence of $M_n$  to $M$ in mean imply that
\begin{align*}
E_\mu\big[\abs{\1_{B_n}M_n- \1_B M}\big]
&\le E_\mu\big[ \1_{B\setminus B_n}M_n\big] + E_\mu\big[\1_B\abs{M_n-M}\big]\to 0.
\end{align*}
Consequently, it follows from \eqref{eq.nu-and-M} and \eqref{eq.nu-limit} that
\begin{align*}
\nu(B)= E_\mu[\1_B M] \qquad\text{for all }B\in \Borel(H),
\end{align*}
which completes the proof.
\end{proof}

The application of Theorem \ref{th.uni-form} requires the uniform integrability of
the densities of the copulas. The following result provides a simple criterion for copulas which are constructed by Lemma \ref{le.copulas-with-2}.
\begin{corollary}\label{co.uni-int-Hellinger}
Let $c_k\colon [0,1]^k\to \Rp$ be the density of a copula of the form
 \begin{align*}
 c_k(u_1,\dots, u_k)
 =\phi_1(u_1,u_2)\phi_2(u_2,u_3)\cdots \phi_{k-1}(u_{k-1},u_k)
 \quad\text{ for all }u_1,\dots, u_k\in [0,1],
 \end{align*}
where $\phi_j\colon [0,1]^2\to\Rp$ is the continuous density of a copula in $\R^2$
for all $j\in\N$ and $k\in\N\setminus\{1\}$ l. Then the following are equivalent:
\begin{enumerate}
\item[{\rm (1)}] the family $(c_k)_{k\in\N\setminus\{1\}}$ is uniformly integrable;
\item[{\rm (2)}] $\displaystyle
 \sum_{j=1}^\infty \left(1- \int_{[0,1]^2} \sqrt{\phi_j(u,v)}\, dudv\right)<\infty.$
\end{enumerate}

\end{corollary}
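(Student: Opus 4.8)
The corollary connects uniform integrability of the product densities $c_k(u_1,\dots,u_k)=\prod_{j=1}^{k-1}\phi_j(u_j,u_{j+1})$ on $[0,1]^k$ to the convergence of a Hellinger-type series. I would work entirely on the probability space $([0,1]^k,\Borel([0,1]^k),\lambda_k)$ and recognise $M_k:=c_k$ as a martingale: indeed, writing $u=(u_1,u_2,\dots)$ and $\F_k=\sigma(u_1,\dots,u_k)$, the computation $\int_0^1\phi_k(u_k,u_{k+1})\,du_{k+1}=1$ from \eqref{eq.part-int=1} in the proof of Lemma \ref{le.copulas-with-2} shows $E[M_{k+1}\mid\F_k]=M_k$, and $E[M_k]=1$. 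Uniform integrability of $(c_k)_{k\in\N}$ is precisely uniform integrability of this non-negative martingale. The standard criterion (see \cite[Le.3.11]{Kallenberg}, already cited in the proof of Theorem \ref{th.uni-form}) is that a non-negative martingale with $E[M_k]=1$ is uniformly integrable if and only if its almost sure limit $M$ satisfies $E[M]=1$, equivalently $M_k\to M$ in $L^1$. So the task reduces to: show $E[M]=1$ $\iff$ the Hellinger series converges.

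**Reducing to a product over independent-like factors.** The key structural observation is that, although the factors $\phi_j(u_j,u_{j+1})$ share a variable with their neighbours, the martingale is a "Markov chain" type product: under $\lambda_k$, the coordinates $(u_1,u_2,\dots)$ are i.i.d.\ uniform, and $M_k=\prod_{j=1}^{k-1}\phi_j(u_j,u_{j+1})$. The cleanest route is to estimate the $L^1$-norm of $\sqrt{M_k}$, i.e.\ the Hellinger affinity between $M_k\cdot\lambda_k$ and $\lambda_k$. Set $a_j:=\int_{[0,1]^2}\sqrt{\phi_j(u,v)}\,du\,dv$. I would compute $E_{\lambda_k}\big[\sqrt{M_k}\big]=\int_{[0,1]^k}\prod_{j=1}^{k-1}\sqrt{\phi_j(u_j,u_{j+1})}\,du$ by integrating out the variables successively. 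Integrating over $u_k$ first gives $\int_0^1\sqrt{\phi_{k-1}(u_{k-1},u_k)}\,du_k=:h_{k-1}(u_{k-1})$, a function of $u_{k-1}$ alone with $\int_0^1 h_{k-1}(u_{k-1})\,du_{k-1}=a_{k-1}$ and, since $\int_0^1\phi_{k-1}(u_{k-1},v)\,dv=1$, Cauchy--Schwarz gives $h_{k-1}(u_{k-1})\le 1$. This is the crucial bound: each successive factor is pointwise $\le 1$, so one does not lose control when folding it into the next integral. Iterating, one gets two-sided control: on one hand $E_{\lambda_k}[\sqrt{M_k}]\le \prod_{j\le k-1} a_j$ is immediate from $h_j\le 1$ combined with a cruder bound; more carefully, using $h_j(u_j)\le 1$ one shows $E_{\lambda_k}[\sqrt{M_k}]\ge \prod_{j=1}^{k-1} a_j$ is false in general, so instead I would argue directly that $E_{\lambda_k}[\sqrt{M_k}]\to 0$ iff $\prod_j a_j\to 0$ iff $\sum_j(1-a_j)=\infty$, and it is the contrapositive that is wanted.

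**Finishing via the two-sided estimate.** Concretely: $\sqrt{M_k}$ is a non-negative supermartingale (by Jensen, since $x\mapsto\sqrt x$ is concave), so $\alpha_k:=E_{\lambda_k}[\sqrt{M_k}]$ is non-increasing and $\alpha_k\downarrow\alpha_\infty\ge 0$. By the martingale $L^1$-convergence of $M_k$ to $M$ plus Fatou, $M_k\to M$ a.s.\ gives $\sqrt{M_k}\to\sqrt M$ a.s., and by dominated convergence (since $\sqrt{M_k}\le \tfrac12(1+M_k)$ and $(M_k)$ is $L^1$-bounded but not necessarily UI — better to use Vitali or just Fatou for one direction and uniform integrability for the other) one gets $\alpha_\infty=E[\sqrt M]$. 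Then Kakutani's dichotomy applies: $E[M]=1$ iff $\alpha_\infty>0$ iff $\prod_{j=1}^\infty a_j>0$ iff $\sum_{j=1}^\infty(1-a_j)<\infty$ (using $0\le a_j\le 1$, which follows from Cauchy--Schwarz and $\int\phi_j=1$, so $\log a_j\asymp -(1-a_j)$ near $1$). Combining with the Kallenberg criterion from the previous proof, $E[M]=1$ is exactly uniform integrability of $(c_k)$, which is statement (1), while $\sum_j(1-a_j)<\infty$ is statement (2). The main obstacle is making the "folding" estimate for $E_{\lambda_k}[\sqrt{M_k}]$ rigorous enough to get the lower bound $\alpha_k\ge\prod_{j<k}a_j$ (or a comparable bound) rather than merely the trivial upper bound $\alpha_k\le 1$: here the pointwise inequality $h_j(u)=\int_0^1\sqrt{\phi_j(u,v)}\,dv\le 1$ is what lets one "absorb" each factor without loss and reach a genuine product; this is precisely where the Markov-chain structure of the copula (rather than a general multivariate density) is used.
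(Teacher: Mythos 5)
Your reduction is the same one the paper uses: pass to i.i.d.\ uniform coordinates $U_1,U_2,\dots$ so that $c_k$ becomes the non-negative mean-one martingale $M_k=\prod_{j=2}^{k}X_j$ with $X_j=\phi_{j-1}(U_{j-1},U_j)$, identify uniform integrability of $(c_k)$ with uniform integrability of this martingale, compute $E\big[\sqrt{X_j}\big]=\int_{[0,1]^2}\sqrt{\phi_{j-1}(u,v)}\,du\,dv=:a_{j-1}$ from the pushforward identity \eqref{eq.mu-and-lambda}, and then appeal to a Kakutani-type dichotomy. Up to the choice of underlying probability space (the paper works on $(H,\Borel(H),\mu)$ and transports to $[0,1]^k$ via $g_k$), everything before the dichotomy is correct and matches the paper.

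The genuine gap is at the dichotomy itself, and you flag it without closing it. The chain you need is $(M_k)$ UI $\iff E_\mu[M]=1\iff \alpha_\infty:=\lim_k E\big[\sqrt{M_k}\big]>0\iff\prod_j a_j>0$; the first equivalence is the standard UI criterion, but the last two are exactly Kakutani's theorem, and your attempt to re-derive them by folding the integral $E\big[\sqrt{M_k}\big]$ fails because consecutive factors share a coordinate: Cauchy--Schwarz gives only $h_j(u)=\int_0^1\sqrt{\phi_j(u,v)}\,dv\le 1$, whence $\alpha_k\le\alpha_{k-1}\le\cdots\le a_1$, and neither $\alpha_k\le\prod_{j<k}a_j$ nor $\alpha_k\ge\prod_{j<k}a_j$ follows (the exact factorisation $\alpha_k=\prod_j E[\sqrt{X_j}]$ is what independence would buy and is false here in general). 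You explicitly call this two-sided bound ``the main obstacle,'' so the decisive implication in both directions is asserted rather than proved. The paper closes this step by invoking Kakutani's theorem for product martingales \cite[14.12]{Williams} as a black box applied to $M_k=X_2\cdots X_k$; to complete your argument you must either do the same or supply a substitute (e.g.\ a conditional version of the dichotomy controlling $E[\sqrt{X_j}\mid U_1,\dots,U_{j-1}]$, which by \eqref{eq.mu-and-lambda} equals the constant $a_{j-1}$ here since $\int_0^1\sqrt{\phi_{j-1}(u,v)}\,dv$ must be integrated against the law of $U_{j-1}$ --- note this conditional expectation is $h_{j-1}(U_{j-1})$, not $a_{j-1}$, so even that route needs care). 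Your instinct that the dependence between the $X_j$ is the crux is sound --- it is also the point at which the paper's own citation of the independent-factor version of Kakutani's theorem deserves scrutiny --- but as written your proposal does not prove either implication of the corollary.
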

\begin{proof}
Choose an arbitrary probability measure $\mu$ in $\M^{\otimes}\cap \M^c(H)$ and
let the martingale $(M_k:\, k\in\N\setminus\{1\})$ on the probability space $(H,\Borel(H),\mu)$
be defined as in \eqref{eq.def-martingales}. Since we have for each $h\in H$ and $k\in\N\setminus\{1\}$ that
\begin{align*}
M_k(h)&=c_k\big( F_1(\scapro{h}{e_1}),\dots, F_k(\scapro{h}{e_k})\big)\\
& = \phi_1\big(F_1(\scapro{h}{e_1}),F_2(\scapro{h}{e_2})\big)
\cdots \phi_{k-1}\big(F_{k-1}(\scapro{h}{e_{k-1}}),F_k(\scapro{h}{e_k})\big)\\
&= M_{k-1}\big(h\big)\, \phi_{k-1}\big(F_{k-1}(\scapro{h}{e_{k-1}}),F_k(\scapro{h}{e_k})\big),
\end{align*}
we conclude $M_k= X_2\cdots X_k$ for every $k\in\N\setminus\{1\}$, where
\begin{align*}
X_j(h):=\phi_{j-1}\big(F_{j-1}(\scapro{h}{e_{j-1}}),F_{j}(\scapro{h}{e_{j}})\big)
\qquad \text{for all }h\in H,\, j=2,\dots , k.
\end{align*}
Kakutani's theorem for product martingales, see \cite[14.12]{Williams}, guarantees that $(M_k:\,k\in\N\setminus\{1\})$ is uniformly integrable if and only if
\begin{align*}
\sum_{j=2}^\infty \Big(1-E_\mu\left[X_j^{1/2}\right]\Big)<\infty.
\end{align*}
Recall that \eqref{eq.uniform-int} shows that the martingale  $(M_k:\,k\in\N\setminus\{1\})$ is uniformly integrable if and only if the family $(c_k)_{k\in\N\setminus\{1\}}$ is uniformly integrable.
Define the function
\begin{align*}
g_{j-1,j}\colon H\to [0,1]^2,
\qquad g_{j-1,j}(h)=\big(F_{j-1}(\scapro{h}{e_{j-1}}), F_j(\scapro{h}{e_j})\big).
\end{align*}
Then it follows from \eqref{eq.mu-and-lambda} that
\begin{align*}
E_\mu\left[ X_j^{1/2}\right]
&= \int_{H }
  \phi_{j-1}^{1/2}\big(F_{j-1}(\scapro{h}{e_{j-1}}),F_{j}(\scapro{h}{e_{j}})\big)
   \,\mu(dh)\\
  &= \int_{[0,1]^2 }
  \phi_{j-1}^{1/2}\big(u,v)\big)
   \,\big(\mu\circ g_{j-1,j}^{-1}\big)(dudv)\\
&= \int_{[0,1]^2}\phi_{j-1}^{1/2}(u,v)\, dudv,
\end{align*}
which completes the proof.
\end{proof}

Recall that the density $\phi_j\colon [0,1]^2\to \Rp$ of a copula is the
density function of a probability measure $\mu_j$ on $\Borel(\R^2)\cap [0,1]$.
The term $1- \int \sqrt{\phi_j(u,v)}\, dudv$  in Lemma \ref{co.uni-int-Hellinger} equals the squared Hellinger distance between $\mu_j$ and the product of its marginal distributions, which are
just the uniform distributions on the intervals $[0,1]$; see for example
\cite{Granger-etal}.

\begin{example} (Continues Example \ref{ex.2-normal}).
Let $\phi_k\colon [0,1]^2\to \Rp$ be the Gaussian copula in $\R^2$ with correlation parameter $\rho_k\in (-1,1)$ for $k\in\N$. Then one obtains
\begin{align*}
1- \int_{[0,1]^2} \sqrt{\phi_j(u,v)}\, dudv
= 1-\frac{\big(1-\rho_k^2\big)^{5/4}}{\big(1-\tfrac{\rho^2_k}{2}\big)^{3/2}};
\end{align*}
see \cite[Se.4.3]{MalevergneSornette}. In order that these terms are summable the sequence $(\rho_k)_{k\in\N}$ must converge to $0$ for $k\to\infty$,
and thus we can neglect the denominator.
The mean value theorem results in
\begin{align*}
1- \big(1-\rho_k^2\big)^{5/4}
= \tfrac{5}{2}(1-x_k^2)^{1/4} x_k\rho_k
\qquad\text{for some }x_k\in [0,\rho_k].
\end{align*}
Consequently, square summability of $(\rho_k)_{k\in\N}$,
i.e.\ $ \sum \rho_k^2<\infty$, implies by Lemma \ref{le.copulas-with-2},
that  the corresponding sequence $(c_k)_{k\in\N}$ of densities is uniformly integrable.
\end{example}

%\bibliographystyle{plain}
%\bibliography{lit-copulas}

\end{document}